\theoremstyle{plain}      
\newtheorem{thm}{Theorem}[section]     
\newtheorem{theorem}[thm]{Theorem}     
\newtheorem{lemma}[thm]{Lemma}     
\newtheorem{proposition}[thm]{Proposition}
\theoremstyle{remark}
\theoremstyle{definition}      
\newtheorem{definition}[thm]{Definition}     
\newcommand{\supp}{\mathop{\rm supp}\nolimits}
\newcommand{\Verte}{\mathop{\rm Vert}\nolimits}
\newcommand{\ord}{\mathop{\rm ord}\nolimits}
\newcommand{\Iim}{\mathop{\rm Im}\nolimits}
\newcommand{\R}{{\mathbb R}}
\newcommand{\Log}{\mathop{\rm Log}\nolimits}
\newcommand{\Arg}{\mathop{\rm Arg}\nolimits}
\newcommand{\val}{\mathop{\rm val}\nolimits}
\newcommand{\Ima}{\mathop{\rm Im}\nolimits}
\title[Geometric structure of phase tropical hypersurfaces]{Geometry and a natural  symplectic structure of phase tropical hypersurfaces}
 \author{Young Rock Kim and Mounir Nisse}
\address{Young Rock Kim\\
 Major in Mathematics Education, Graduate School of Education, Hankuk University of Foreign Studies,\\ 
107 Imunro Dongdaemun-gu, Seoul, 130-791, Korea}
\email{rocky777@hufs.ac.kr}
\address{Mounir Nisse\\
 School of Mathematics KIAS\\ 
 87 Hoegiro Dongdaemun-gu\\ 
 Seoul, 130-722, Republic of Korea.}
\email{mounir.nisse@gmail.com}
\thanks{This work was supported by Basic Science Research Program through the National Research Foundation of Korea (NRF) funded by the Ministry of Education (NRF-2015R1D1A1A01059643).}
\subjclass[2010]{14T05, 32A60, 53D40}
\keywords{Polyhedral complex, tropical variety, (co)amoeba, phase tropical hypersurface}
\begin{document}
\maketitle

\begin{abstract}
First, we define phase tropical hypersurfaces in terms of a degeneration data of smooth complex  algebraic hypersurfaces in $(\mathbb{C}^*)^n$. Next,  we prove that complex hyperplanes are diffeomorphic to their degeneration called phase tropical hyperplanes. More generally,  using  Mikhalkin's decomposition into  pairs-of-pants of smooth algebraic hypersurfaces, we  show that phase tropical hypersurfaces with smooth tropicalization,   possess   naturally a smooth differentiable structure. Moreover, we prove that phase tropical hypersurfaces possess a  natural symplectic structure. 
\end{abstract}

%\date{}
\section{introduction}

In this paper we deal with smooth algebraic hypersurfaces in the complex projective space $\mathbb{CP}^n$. So, let $V$ be  a smooth hypersurface  in $\mathbb{CP}^n$ of degree $d$. Recall that  for a fixed degree, generically a hypersurface in the projective space is smooth and  transverse to all coordinate hyperplanes and all their intersections. Moreover, hypersurfaces in $\mathbb{CP}^n$ with the same degree are  all  diffeomorphic,  and if we equip these hypersurfaces  with the Fubini-Study symplectic form on $\mathbb{CP}^n$ then they are also symplectomorphic.  We denote by $\mathring{V}$ the intersection $V\cap (\mathbb{C}^*)^n$ where $(\mathbb{C}^*)^n$ is the complement of the coordinate hyperplanes in  $\mathbb{CP}^n$. In this case, $\mathring{V}$ is given by some polynomial equation.  One can degenerate the complex standard structure of the complex algebraic torus   to a worst possible degeneration, called ``maximal degeneration'' by M. Kontsevich and Y. Soibelman (see \cite{KS-00} and \cite{KS-04}), and see what kind of geometry can have a degeneration of our variety  $\mathring{V}$. 
After taking the logarithm, $(\mathbb{C}^*)^n$   degenerates or, in other words, collapse onto $\mathbb{R}^n$,  and our hypersurface onto  a balanced rational polyhedral complex $\Gamma$ called {\em{tropical variety}}. One can ask the following question:{\em{ What kind of geometry one can have on a nice lifting in $(\mathbb{C}^*)^n$  of this  balanced rational polyhedral complex?}} This paper give an answer to this question using tools from tropical  and phase tropical geometry.

\vspace{0.2cm}

Tropical geometry is a recent area of mathematics that can be seen as a limiting aspect (or ``degeneration'') of algebraic geometry. Where complex curves viewed as Riemann surfaces turn to metric graphs (one dimensional combinatorial object), and $n$-dimensional complex varieties turn to $n$-dimensional polyhedral complexes with some properties such as  the balancing condition. In other words, tropical varieties are finite dimensional polyhedral complexes with some additional properties. As example, the tropical projective space $\mathbb{TP}^1$ is a smooth projective tropical variety homeomorphic to the  segment. In general, the tropical projective space $\mathbb{TP}^n$ is a smooth projective tropical variety homeomorphic to the  $n$-dimensional simplex. Moreover, as in the classical algebraic geometry, a projective tropical $n$-variety $V$ is a certain $n$-dimensional polyhedral complex in $\mathbb{TP}^N$. One of the most interesting projective tropical varieties are obtained by the tropical limit of a family of projective  algebraic varieties $V_t$ with $1\leq t<\infty$ and $t$ tends to $\infty$. To be more precise, they are the limit of amoebas where  amoebas of algebraic (or analytic) varieties are their image under the logarithm with base a real number $t$. For example, every tropical hypersurface is provided by such way. Tropical objects are some how, the image of a classical objects under the logarithm with base infinity, they are  also called  non-Archimedean amoebas. 

\vspace{0.2cm}
 Phase tropical varieties are some  lifting of tropical  varieties in the complex algebraic torus.  More precisely, for 
  any  strictly positive real  number $t$ we define the self diffeomorphism $H_t$ of $(\mathbb{C}^*)^n$. This defines a new  complex structure $J_t$ on $(\mathbb{C}^*)^n$ denoted by $J_t$ different  from  the standard complex structure if $t\ne e^{-1}$.  One way  to define  phase tropical varieties,   is to take the limit  $\mathring{V}_\infty$ (with respect to the Hausdorff metric on compact sets in $(\mathbb{C}^*)^n$) of a family of  $J_t$-holomorphic varieties $\{\mathring{V}_t\}_{t\in [e^{-1}, \infty)}$ when $t$ goes to $\infty$.  First, in case of hypersurfaces, we prove that  if the hypersurfaces $\mathring{V}_t$  are smooth with same degree (i.e. their defining polynomials have the same Newton polytope $\Delta$), then for a sufficently large $t$ the  $\mathring{V}_t$'s are diffeomorphic to their degeneration $\mathring{V}_\infty$, and the compactification $M_\infty$ of $\mathring{V}_\infty$ in the toric variety $X_\Delta$ associated to $\Delta$ (see Subsection 3.3 for the precise definition of $X_\Delta$) have the same properties, and we have the following:
 
\begin{theorem}\label{A}
Let $\mathring{V}_t\subset (\mathbb{C}^*)^n$ be a family of  smooth complex algebraic hypersurfaces with a fixed degree $\Delta$, and denote by $\mathring{V}_\infty$ the phase tropical hypersurface associated to the family  $\{\mathring{V}_t\}_t$ (i.e., the limit of $H_t(\mathring{V}_t)$ when $t$ goes to $\infty$). Then for a sufficiently large $t\gg 0$ the following statements hold:
\begin{itemize}
\item[(i)]\, The hypersurface $\mathring{V}_t$ is diffeomorphic to $\mathring{V}_\infty$;
\item[(ii)]\, The  compactification $M_\infty$ of $\mathring{V}_\infty$ in the toric variety $X_\Delta$ associated to $\Delta$ is diffeomorphic to $V_t$, where $V_t$ is the closure of $\mathring{V}_t$ in $X_\Delta$.
\end{itemize} 
\end{theorem}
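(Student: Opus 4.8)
The plan is to reduce part (i) to the case of a hyperplane — already treated earlier in the paper — and then to assemble the pieces along Mikhalkin's decomposition of a smooth hypersurface into pairs of pants. Write $\Gamma=\Trop(\mathring V_\infty)\subset\R^n$; by hypothesis $\Gamma$ is smooth, hence dual to a unimodular triangulation $\tau$ of the Newton polytope $\Delta$. Since $\mathring V_\infty$ is the Hausdorff limit of $H_t(\mathring V_t)$, the amoebas $\Log\bigl(H_t(\mathring V_t)\bigr)$ converge to $\Gamma$, so for $t\gg0$ they lie in an arbitrarily small neighborhood of $\Gamma$ and Mikhalkin's decomposition applies with the triangulation $\tau$: cutting $\mathring V_t$ along the $(\Log\circ H_t)$–preimages of a system of walls separating the vertices of $\Gamma$ expresses it as a union of pieces $\Pi_v(\mathring V_t)$, one for each vertex $v$ of $\Gamma$, each diffeomorphic to the pair of pants $\mathcal P_{n-1}=\mathbb{CP}^{n-1}\setminus\{n+1\text{ hyperplanes in general position}\}$, glued along their ends, each end being diffeomorphic to a product of a compact real torus with a pair of pants of smaller dimension.

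First I would record the same decomposition for $\mathring V_\infty$ itself: by construction $\mathring V_\infty$ is glued according to $\tau$ from pieces which, over a vertex $v$ of $\Gamma$ (dual to a unimodular $n$–simplex $\sigma_v$), coincide — after the $\mathrm{GL}_n(\Z)$ automorphism of $(\C^*)^n$ carrying $\sigma_v$ to the standard simplex — with the phase tropical hyperplane $L^{n-1}_\infty$, and over lower cells of $\Gamma$ with products of subtori and lower phase tropical hyperplanes; this is precisely the gluing that equips $\mathring V_\infty$ with its smooth structure. Using the hyperplane case established earlier, namely that $L^{n-1}_\infty$ is diffeomorphic to the complex hyperplane $\mathring L^{n-1}\cong\mathcal P_{n-1}$, together with an induction on dimension for the lower pieces, I obtain for every vertex $v$ a diffeomorphism $\phi_v\colon\Pi_v(\mathring V_t)\xrightarrow{\ \sim\ }\Pi_v(\mathring V_\infty)$, and analogous diffeomorphisms of the ends.

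The main step — and the one I expect to be the real obstacle — is to choose all these fibrewise diffeomorphisms \emph{compatibly}, so that $\phi_v$ and $\phi_{v'}$ agree on a common end whenever the vertices $v,v'$ of $\Gamma$ are joined by an edge; the patched-together map is then a global diffeomorphism $\mathring V_t\to\mathring V_\infty$, which proves (i). I would arrange this by taking every $\phi_v$ to be equivariant for the relevant compact torus action and equal to the evident model map on a collar of each end: on such a collar both $\mathring V_t$ (for $t\gg0$) and $\mathring V_\infty$ are identified with the same product of a cell of $\Gamma$, an argument torus, and a lower pair of pants, and the transition cocycle is on both sides the integral–affine one determined by $\tau$ and the lattice $\Z^n$, so the collar normalizations force the required agreement. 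The hypothesis $t\gg0$ enters exactly here, to guarantee that the amoeba of $H_t(\mathring V_t)$ is close enough to $\Gamma$ for the decomposition and the collar identifications to be valid with this fixed $\tau$.

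For part (ii) I would run the same argument relative to the toric boundary of $X_\Delta$. Being a smooth hypersurface of degree $\Delta$, $V_t$ is transverse to every torus orbit $\calO_F\subset X_\Delta$ indexed by a face $F\prec\Delta$, and in a neighborhood of $\calO_F$ it is a product of the orbit directions with a copy of the interior picture for the truncated polynomial $f_t|_F$, whose Newton polytope is $F$ with the induced triangulation. The compactification $M_\infty$ is built so that near $\calO_F$ its trace is the phase tropical hypersurface of $\Trop(\mathring V_\infty)|_F$, i.e.\ it has the matching local structure. Applying the interior result of (i) stratum by stratum and patching with the collar construction above extends the diffeomorphism over all of $X_\Delta$, giving $M_\infty\cong V_t$. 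Apart from the compatibility step, everything — the pair–of–pants decomposition on the complex side, the hyperplane base case, and the toric transversality — is either quoted or a routine dimension induction.
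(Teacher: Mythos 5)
Your proposal is correct and takes essentially the same approach as the paper: both rest on Mikhalkin's pair-of-pants decomposition indexed by the smooth tropicalization $\Gamma$ of $\mathring{V}_\infty$, with the phase tropical hyperplane as the local model over each vertex and lower-dimensional pants (times tori) over smaller cells, and then a gluing argument for parts (i) and (ii). The only real difference is one of emphasis: where the paper invokes Mikhalkin's Lemma 6.5 and Theorem 4 of \cite{M2-04} (after first reducing to a Viro patchworking family) to obtain the smoothness of $\mathring{V}_t$ for $t\gg 0$ and the isotopy $V_t\simeq M_\infty(\Gamma)$, you sketch a re-derivation of that gluing-compatibility step via collar normalizations and torus-equivariance of the local diffeomorphisms.
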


\vspace{0.2cm}

Moreover, using the fact that  pairs-of-pants possess a natural symplectic structure which gives rise to  the standard symplectic structure on the complex projective space  $\mathbb{CP}^n$ after compactification (i.e. collapsing the pair-of-pants boundary), and the gluing of pairs-of-pants can be done in a natural way symplectically,  we obtain a natural symplectic structure on all our phase tropical hypersurface.

Let $(\mathring{V}_t, \iota_t^*(\omega))\subset ((\mathbb{C}^*)^n, \omega)$   be  a family of smooth symplectic hypersurfaces where $\iota_t$ is the inclusion map $\iota_t: \mathring{V}_t\hookrightarrow (\mathbb{C}^*)^n$,
and $\omega$ is the symplectic form on the complex algebraic torus $(\mathbb{C}^*)^n$ defined by:
\begin{equation}\label{(1)}
\omega = \frac{1}{2\sqrt{-1}}\sum_{i=1}^n\frac{dz_i}{z_i}\wedge\frac{d\overline{z}_i}{\overline{z}_i}.
\end{equation}
 Moreover, assume that the  phase tropical hypersurface  $\mathring{V}_\infty$  which the limit  (with respect to the Hausdorff metric on compact sets in $(\mathbb{C}^*)^n$)  exists
 and is equipped with the natural symplectic structure (i.e., with the natural symplectic form $\omega_{nat}$) constructed  by Theorem  \ref{B}.   One can ask the following natural question:  {\em{Are $(\mathring{V}_t, \iota_t^*(\omega))$ and $(\mathring{V}_\infty, \omega_{nat})$ symplectomorphic?}}
 
 \vspace{0.2cm}
 
 The following theorem gives an affirmative answer to this question:

\vspace{0.2cm}

\begin{theorem}\label{B}
Let $\mathring{V}_t\subset (\mathbb{C}^*)^n$ be a family of  smooth complex algebraic hypersurfaces with a fixed degree  $\Delta$, and denote by $\mathring{V}_\infty$ the phase tropical hypersurface associated to the family  $\{\mathring{V}_t\}_t$ (i.e., the limit of $H_t(\mathring{V}_t)$ when $t$ goes to $\infty$). With notations as above, and for a sufficiently large $t\gg 0$ the following statements hold:
\begin{itemize}
\item[(i)]\,  The hypersurface $\mathring{V}_\infty$ possesses  a natural smooth symplectic structure;
\item[(ii)]\,  the hypersurfaces $(\mathring{V}_t, \iota_t^*(\omega))$ and $(\mathring{V}_\infty, \omega_{nat})$ are symplectomorphic.
\end{itemize}
\end{theorem}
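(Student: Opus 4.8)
The plan is to deduce part (i) by transporting the canonical symplectic geometry of pairs-of-pants through Mikhalkin's pair-of-pants decomposition, and part (ii) by a Moser stability argument along the diffeomorphism supplied by Theorem~\ref{A}. For (i), fix $t\gg 0$ so that Mikhalkin's construction decomposes the smooth hypersurface $\mathring{V}_t$ — and, in the Hausdorff limit, the phase tropical hypersurface $\mathring{V}_\infty$ with smooth tropicalization $\Gamma$ — into pieces indexed by the cells of $\Gamma$, equivalently by the cells of the unimodular subdivision of $\Delta$ dual to $\Gamma$. Over a vertex $v$ of $\Gamma$ the corresponding piece is a copy of the $(n-1)$-dimensional pair-of-pants $\mathcal{P}^{n-1}=\{1+w_1+\cdots+w_n=0\}\subset(\mathbb{C}^*)^n$, i.e.\ the complement of $n+1$ hyperplanes in general position in $\mathbb{CP}^{n-1}$; over a cell $F$ of dimension $j>0$ the piece is diffeomorphic to $\mathcal{P}^{\,n-1-j}\times(S^1)^{j}\times\mathring{F}$, a lower-dimensional pair-of-pants times a real $j$-torus times the relative interior of $F$. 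On each $\mathcal{P}^{m}$ I would place its natural symplectic form $\omega_{\mathcal P}$, namely the restriction of the Fubini--Study form under $\mathcal{P}^{m}\hookrightarrow\mathbb{CP}^{m}$, which is precisely the form that yields the standard symplectic form on $\mathbb{CP}^{m}$ once the pair-of-pants boundary is collapsed; and on each product piece the split form $\omega_{\mathcal P}\oplus\omega_{\mathrm{st}}$, where $\omega_{\mathrm{st}}$ is the standard symplectic form on $(S^1)^{j}\times\mathring{F}\subset T^*T^{j}$ cut out by the torus form $\omega$ of \eqref{(1)}. Each of these is closed and non-degenerate on its piece.

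The heart of (i) is the symplectic compatibility of the gluing. Two adjacent pieces overlap in a collar which, in the pair-of-pants model, is a neighbourhood of one of the removed hyperplanes and is itself of product type, a lower-dimensional pair-of-pants times an annulus; and the Fubini--Study form of $\mathbb{CP}^{m}$ near a coordinate hyperplane is symplectomorphic to the split form (lower Fubini--Study) $\oplus$ (area on the annulus). I would use this to identify the two natural forms on each overlap, directly when possible and otherwise after a Moser isotopy supported in the collar. Since the collar identifications are forced by the adjacency of cells and the integral affine structure of $\Delta$, these local matchings can be carried out consistently over all cells of $\Gamma$ and patch the local forms to a single closed non-degenerate $2$-form $\omega_{nat}$ on $\mathring{V}_\infty$; the same boundary-collapsing procedure, applied piecewise, extends $\omega_{nat}$ across the toric divisors to a symplectic form on the compactification $M_\infty\subset X_\Delta$. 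This establishes (i).

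For (ii), Theorem~\ref{A} furnishes a diffeomorphism $\Phi\colon\mathring{V}_t\xrightarrow{\sim}\mathring{V}_\infty$, so it suffices to make $\Phi$ a symplectomorphism after correction. Over the unbounded facets of $\Gamma$ both $\mathring{V}_t$ (for $t\gg 0$) and $\mathring{V}_\infty$ look like $(S^1)^{n-1}\times\mathring{F}$ with the torus form, so $\Phi$ may be chosen to satisfy $\Phi^*\omega_{nat}=\iota_t^*\omega$ there; hence $\iota_t^*\omega-\Phi^*\omega_{nat}$ is supported in a compact subset of $\mathring{V}_t$. On that compact core the two forms are cohomologous — both represent the class determined by the degree data $\Delta$, the Fubini--Study normalisations on the pair-of-pants pieces being chosen so that the periods over the vanishing torus orbits agree — so $\iota_t^*\omega-\Phi^*\omega_{nat}=d\eta$ for a compactly supported $1$-form $\eta$. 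Moser's stability theorem in its compactly supported form, applied to the path $\omega_s=\iota_t^*\omega-s\,d\eta$ (non-degenerate for suitable normalisations), then produces a compactly supported isotopy whose time-one map $\psi$ satisfies $\psi^*\Phi^*\omega_{nat}=\iota_t^*\omega$; the composite $\Phi\circ\psi$ is the desired symplectomorphism $(\mathring{V}_t,\iota_t^*\omega)\cong(\mathring{V}_\infty,\omega_{nat})$, which together with (i) gives (ii).

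The main obstacle I anticipate is the symplectic gluing in (i): one must verify that the natural forms on the pair-of-pants pieces and on the product pieces not only agree on overlaps but patch to a form that is \emph{globally} closed and non-degenerate, which demands a combinatorially coherent choice of collar coordinates near every cell of $\Gamma$, quite possibly supplemented by Moser interpolations inside the collars. A second genuine difficulty in (ii) is the bookkeeping behind the compactly supported primitive $\eta$ — verifying that the Fubini--Study scalings on the pieces are precisely the ones dictated by $\Delta$, that $\Phi$ can be arranged to match the torus form on the ends, and that non-degeneracy persists along the Moser path — a subtlety that is present precisely because $\mathring{V}_t$ and $\mathring{V}_\infty$ are non-compact.
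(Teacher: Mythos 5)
Your proof of part~(i) is essentially the argument in the paper: you put the Fubini--Study form on each pair-of-pants piece, use a split annulus-type form on the gluing collars, and patch, observing that the forms restricted to the lower-dimensional pair-of-pants factor agree across the seam while the orientation reversal shows up in the sign of the $ds\wedge dt$ term. The paper phrases the decomposition in terms of boundary $S^1$-fibrations $\mathscr{B}_j\to\mathscr{P}^{n-2}$ rather than cells of $\Gamma$, but these are two descriptions of the same combinatorics, and you correctly flag the same genuine delicacy (coherence of the collar identifications) that the paper glosses over.

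Where you diverge is part~(ii). The paper first invokes Mikhalkin's Moser argument on the \emph{compactifications} to get a symplectomorphism $\phi\colon M_\infty(\Gamma)\to V_t$, and then restricts it to the open toric parts via a commutative diagram; it also remarks in passing that one could alternatively use the Greene--Shiohama non-compact version of Moser. You take that alternative route directly: build the diffeomorphism from Theorem~\ref{A}, arrange it to be exactly symplectic on the cylindrical ends, check that the discrepancy $\iota_t^*\omega-\Phi^*\omega_{nat}$ is exact with a compactly supported primitive, and run a compactly supported Moser isotopy. The trade is clear: the paper's route outsources the analytic work to the compact setting but then quietly needs $\phi$ to carry $\mathring V_\infty$ into $\mathring V_t$ (i.e.\ to respect the toric divisor), which is not addressed; your route avoids that but requires the cohomological bookkeeping --- that the two forms are cohomologous, with a primitive vanishing at infinity, and that the Moser path stays non-degenerate --- which you honestly identify as the point requiring care and which is precisely the content of the Greene--Shiohama-type argument the paper mentions but does not spell out. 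Either route is defensible; neither is fully carried out in the source.
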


\vspace{0.2cm}

We will use the natural  logarithm i.e. with base the  Napier's constant $e$, so that the Archimedean amoeba of
a subvariety of the complex torus $(\mathbb C^*)^n$ is its image under the coordinatewise logarithm map. Recall that amoebas were introduced by Gelfand, Kapranov, and Zelevinsky in 1994 \cite{GKZ-94}. The coamoeba of a subvariety of $(\mathbb C^*)^n$ is its image under the coordinatewise argument map to the real torus $(S^1)^n$.
Coamoebas were introduced by Passare in a talk in 2004 (see \cite{NS-11} and \cite{NS-13} for more details about coamoebas).

\vspace{0.2cm}  
 
This paper is organized as follows. In Section 2, we explain preliminary results in this area. In Section 3, we define phase tropical hypersurface and describe tropical localization. In Section 4,  we describe examples  of coamoebas and  phase tropical hypersurfaces. In Section 5, we give the proof of Theorem \ref{A}. In Section 6, we construct in a natural way a symplectic structure on phase tropical varieties which proves Theorem \ref{B}.

\section{Preliminaries}
In this section we  recall basic concepts of tropical hyperurfaces relevant for our paper. For the general case  we can see \cite{MS-15} with more details. We consider algebraic hypersurfaces $\mathring{V}$ in the complex algebraic torus $(\mathbb{C}^*)^n$, where $\mathbb{C}^*=\mathbb{C}\setminus \{ 0\}$ and $n\geq 1$ an integer. This means that $\mathring{V}$ is the zero locus of a polynomial:
\begin{equation}\label{(1)}
f(z) =\sum_{\alpha\in \supp (f)} a_{\alpha}z^{\alpha}, \quad
z^{\alpha}=z_1^{\alpha_1}z_2^{\alpha_2}\ldots z_n^{\alpha_n},
\end{equation}
where each $a_{\alpha}$ is a non-zero complex number and $\supp (f)$ is a
finite subset of $\mathbb{Z}^n$, called the support of the polynomial
$f$,  and its  convex hull in $\mathbb{R}^n$ is called the Newton polytope
 of $f$ that we denote by $\Delta_f$.
Moreover, we assume that $\supp (f)\subset
\mathbb{N}^n$ and $f$ has no factor of the form $z^{\alpha}$. 
%with $\alpha =(\alpha_1,\ldots , \alpha_n)$.

\vspace{0.3cm}

The {\em amoeba} $\mathscr{A}_f$ of an algebraic variety $\mathring{V}\subset (\mathbb{C}^*)^n$ is by definition (see M. Gelfand, M.M. Kapranov and A.V. Zelevinsky \cite{GKZ-94}) the image of $\mathring{V}$ under the map :
\[
\begin{array}{ccccl}
\Log&:&(\mathbb{C}^*)^n&\longrightarrow&\mathbb{R}^n\\
&&(z_1,\ldots ,z_n)&\longmapsto&(\log |z_1| ,\ldots ,\log |
z_n|).
\end{array}
\]
%
%%
%%%
\vspace{0.2cm}

Let $\mathbb{K}$ be the field of  Puiseux series with real powers, which is the field of series $\displaystyle{a(t) =\sum_{j\in A_a}\xi_jt^j}$ with $\xi_j\in \mathbb{C}^*$ and $A_a$ is a well-ordered subset of $\mathbb{R}$ (it means any of its subsets has a smallest element). It is well known that the field $\mathbb{K}$ is algebraically closed of characteristic zero. Moreover, it has a non-Archimedean valuation $\val (a) = - \min A_a$:
\[ 
\left\{ \begin{array}{ccc}
\val (ab)&=& \val (a) + \val (b) \\
\val (a + b)& \leq& \max \{ \val (a)  ,\, \val (b)  \} ,
\end{array}
\right.
\]
and we set $\val (0) = -\infty$. Let $g\in \mathbb{K}[z_1,\ldots ,z_n]$ be a polynomial as in \eqref{(1)}. 
%but the coefficients and the components of $z$ are in $\mathbb{K}$. 
If $<,>$ denotes the scalar product in $\mathbb{R}^n$, then the following  piecewise affine linear convex function $\displaystyle{g_{trop}(x) = \max_{\alpha\in\supp (g) } \{ \val (a_{\alpha}) + <\alpha , x>  \}}$, which is in the same time the Legendre transform of the function $\nu :\supp (g)\rightarrow \mathbb{R}$ defined by $\nu (\alpha ) = \min A_{a_{\alpha}}$, is called the {\em tropical polynomial} associated to $g$.

\begin{definition} The tropical hypersurface $\Gamma_g$ is the set of points in $\mathbb{R}^n$ where the tropical polynomial 
 $g_{trop}$ is not smooth (called the corner locus of  $g_{trop}$).
\end{definition}

\noindent We have the following Kapranov's theorem (see \cite{K-00}):

\begin{theorem}[\cite{K-00}, Kapranov] The tropical hypersurface $\Gamma_g$ defined by the tropical polynomial $g_{trop}$ is the subset of $\mathbb{R}^n$ image under  the valuation map of the algebraic hypersurface  with defining polynomial $g$. 
\end{theorem}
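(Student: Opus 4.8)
The plan is to establish the two inclusions $\Val(\mathring V(g))\subseteq\Gamma_g$ and $\Gamma_g\subseteq\Val(\mathring V(g))$ separately, where $\mathring V(g)=\{g=0\}\cap(\K^*)^n$ and $\Val$ is the coordinatewise valuation map; the first is elementary, the second is the substance of the theorem.

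For the inclusion $\Val(\mathring V(g))\subseteq\Gamma_g$, I would take $w\in\mathring V(g)$ and set $x=\Val(w)$. Each monomial $a_\alpha w^\alpha$ has valuation $\val(a_\alpha)+\langle\alpha,x\rangle$, so the ultrametric inequality gives $\val(g(w))\le\max_{\alpha\in\supp(g)}\{\val(a_\alpha)+\langle\alpha,x\rangle\}=g_{trop}(x)$, with equality whenever the maximum is attained by a single $\alpha$. Since $g(w)=0$ has valuation $-\infty$, the maximum must be attained at least twice, say at $\alpha\neq\beta$. Then in a neighbourhood of $x$ the piecewise-linear function $g_{trop}$ dominates $\max\{\val(a_\alpha)+\langle\alpha,\cdot\rangle,\ \val(a_\beta)+\langle\beta,\cdot\rangle\}$ and coincides with it at $x$; as the two affine functions have distinct gradients $\alpha\neq\beta$, $g_{trop}$ cannot be differentiable at $x$, so $x\in\Gamma_g$.

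For the reverse inclusion $\Gamma_g\subseteq\Val(\mathring V(g))$, fix $x\in\Gamma_g$, let $M(x)=g_{trop}(x)$ and $S_x=\{\alpha\in\supp(g):\val(a_\alpha)+\langle\alpha,x\rangle=M(x)\}$, and write $a_\alpha=c_\alpha t^{-\val(a_\alpha)}+(\text{higher order in }t)$ with $c_\alpha\in\C^*$. I would form the initial polynomial $\ini_x(g)=\sum_{\alpha\in S_x}c_\alpha z^\alpha\in\C[z_1^{\pm1},\dots,z_n^{\pm1}]$. Since $x$ is a corner of $g_{trop}$ we have $|S_x|\ge2$, so $\ini_x(g)$ is not a monomial; because the units of $\C[z_1^{\pm1},\dots,z_n^{\pm1}]$ are precisely the monomials and $\C$ is algebraically closed, the Nullstellensatz yields a root $\xi\in(\C^*)^n$ of $\ini_x(g)$. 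The remaining, and crucial, step is to promote $\xi$ to an actual root of $g$ over $\K$ with the prescribed valuation vector: writing $w_i=t^{-x_i}(\xi_i+\varepsilon_i)$ with $\val(\varepsilon_i)<0$ automatically gives $\Val(w)=x$, and a direct substitution shows $g(w)=t^{-M(x)}\bigl(\ini_x(g)(\xi)+(\text{terms of strictly higher order in }t)\bigr)$, whose leading term already vanishes. One then solves $g(w)=0$ exactly, either by a successive-approximation (Hensel-type) argument, or by specializing $n-1$ of the coordinates to generic elements of $\K^*$ with suitable valuations — chosen so that the resulting one-variable polynomial has a Newton-polygon edge of the slope dictated by $x$ with residual root $\xi$ — and then invoking that $\K$ is algebraically closed. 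This produces $w\in(\K^*)^n$ with $g(w)=0$ and $\Val(w)=x$.

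The hard part will be this last lifting step: one has to guarantee that the higher-order corrections needed to make $g(w)=0$ do not conspire to cancel the leading term $\xi_i t^{-x_i}$ of some coordinate and thereby move $\Val(w)$ off $x$. The clean way to control this uses that $\K$ (the Hahn/Puiseux field $\C((t^{\R}))$) is algebraically closed with divisible value group $\R$ — equivalently, a Newton-polygon analysis — together with a genericity choice in the specialization so that the relevant Newton-polygon slope is preserved. If one is content to prove only that $\Gamma_g$ is the \emph{closure} of $\Val(\mathring V(g))$, it is enough to carry this out for rational $x$, which are dense in the rational polyhedral complex $\Gamma_g$, and then pass to closures; the equality without taking closures uses the surjectivity of $\val:\K^*\to\R$.
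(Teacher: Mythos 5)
The paper offers no proof of this statement; it is cited verbatim to Kapranov's preprint \cite{K-00}, so your proposal can only be judged on its own merits.

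Your first inclusion $\Val(\mathring V(g))\subseteq\Gamma_g$ is complete and correct: the ultrametric property forces the maximum defining $g_{trop}(x)$ to be attained by at least two exponents when $g(w)=0$, and a convex piecewise-affine function fails to be differentiable precisely where two affine pieces with distinct gradients are simultaneously maximal.

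Your second inclusion $\Gamma_g\subseteq\Val(\mathring V(g))$ has the right architecture --- form $\ini_x(g)$, find $\xi\in(\C^*)^n$ in its zero locus (correctly justified via the Nullstellensatz for Laurent rings since $|S_x|\ge 2$ rules out a monomial), and lift $\xi$ to $w\in(\K^*)^n$ with $g(w)=0$ and $\Val(w)=x$. But the lifting step is the entire substance of Kapranov's theorem, and as written you only gesture at it. A Hensel-type argument does not apply out of the box: you have one equation in $n$ unknowns, so there is no simple-root or implicit-function setup until you cut down to one variable, and even then $\xi$ may be a degenerate zero of $\ini_x(g)$. The specialization route does work but requires a careful induction on $n$: one fixes $w_2,\dots,w_n\in\K^*$ with $\val(w_i)=x_i$ and leading coefficients chosen generically so that (i) $g(z_1,w_2,\dots,w_n)\neq 0$ in $\K[z_1^{\pm1}]$, (ii) its Newton polygon has an edge of slope $x_1$, and (iii) the associated edge polynomial has a root in $\C^*$ compatible with $\xi_1$. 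Establishing (i)--(iii) simultaneously for a generic specialization is exactly the nontrivial lemma a complete proof needs (see the lifting lemma in \cite{MS-15}). You locate the difficulty correctly --- the corrections must not move $\Val(w)$ off $x$ --- but you do not resolve it, so this remains a genuine gap; the surrounding scaffolding (initial forms, Nullstellensatz for the Laurent ring, surjectivity of $\val:\K^*\to\R$ to get exact equality rather than closure) is all correctly deployed.
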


\noindent $\Gamma_g$ is also called the non-Archimedean amoeba of the zero locus of $g$ in $(\mathbb{K}^*)^n$.

\vspace{0.1cm} 

Let $g$ be a polynomial as above, $\Delta$ its Newton polytope, and $\tilde{\Delta}$ its  extending Newton polytope, i.e.,
 $\tilde{\Delta} := \mbox{convexhull} \{ (\alpha , r )\in \supp (g)\times \mathbb{R} \mid \, r\geq \min A_{a_{\alpha}} \}$. Let us extend the above function $\nu$ (defined on $\supp (g)$) to all $\Delta$ as follow:
\[
\begin{array}{ccccl}
\nu&:&\Delta&\longrightarrow&\mathbb{R}\\
&&\alpha&\longmapsto&\min \{ r\mid\, (\alpha ,r)\in \tilde{\Delta} \}.
\end{array}
\]
By taking the linear subsets of the lower boundary of $\tilde{\Delta}$,  it is clear that the linearity domains of $\nu$ define a convex subdivision $\tau = \{\Delta_1,\ldots ,\Delta_l\}$ of $\Delta$. Let $y= <x,v_i>+r_i$ be the equation of the hyperplane $Q_i\subset \mathbb{R}^n\times\mathbb{R}$ containing  points of coordinates $(\alpha ,\nu (\alpha ))$ with $\alpha \in \Verte (\Delta_i)$.

\noindent There is a duality between the subdivision $\tau$ and the subdivision of $\mathbb{R}^n$ induced by $\Gamma_g$, 
where each connected component of
 $\mathbb{R}^n\setminus \Gamma_g$ is dual to some vertex of $\tau_f$ and each $k$-cell of $\Gamma_g$ is dual to some
 $(n-k)$-cell of $\tau$. In particular, each $(n-1)$-cell of $\Gamma_g$ is dual to some edge of $\tau$. 
 If $x\in E_{\alpha\beta}^*\subset \Gamma_g$,  then $<\alpha , x> -\nu (\alpha ) = <\beta , x> -\nu (\beta )$, 
so $<\alpha  -\beta , x - v_i>  = 0$. This means that $v_i$ is a vertex of $\Gamma_g$ dual to some $\Delta_i$ having 
$E_{\alpha\beta}$ as edge.

\begin{definition}
A tropical hypersurface $\Gamma\subset \mathbb{R}^n$ is smooth if and only if its dual subdivision is a triangulation where the Euclidean volume of every triangle is equal to $\frac{1}{n!}$.
\end{definition}

%
%
%

%\vspace{0.3cm}

\noindent 
Let  $\mathring{V}\subset (\mathbb{C}^*)^n$ be an algebraic hypersurface defined by a polynomial
${f (z) = \sum_{\alpha_i\in A}a_{\alpha_i}z^{\alpha_i}}$, with support $A = \{ \alpha_1,\ldots , \alpha_l,\alpha_{l+1},\ldots ,\alpha_r\} \subset \mathbb{Z}^n$, and $A' = \{ \alpha_{l+1},\ldots ,\alpha_r\} = \Iim (\ord )$ where $\ord$ is the order mapping from the set of complement components of the amoeba $\mathscr{A}$ of $\mathring{V}$ to $\Delta\cap\mathbb{Z}^n$ (see \cite{FPT-00}).
It was shown by Mikael  Passare and Hans Rullg\aa  (see  \cite{PR1-04}) that the spine $\Gamma$ of the amoeba
$\mathscr{A}$ is  a non-Archimedean amoeba defined by the
tropical polynomial
$$
f_{trop}(x) = \max_{\alpha\in A'}\{ c_{\alpha}+<\alpha , x>\} ,
$$
where $c_{\alpha}$  are a constants  defined by:
\begin{equation}\label{(2)}
c_{\alpha} = \R \left(   \frac{1}{(2\pi i)^n}\int_{\Log^{-1}(x)}\log
  \left|\frac{f(z)}{z^{\alpha}}\right| \frac{dz_1\wedge \ldots \wedge
    dz_n}{z_1\ldots z_n}\right)
%\qquad\qquad\qquad  (2)
\end{equation}
where  $x\in E_{\alpha}$,\, $z = (z_1,\cdots ,z_n)\in (\mathbb{C}^*)^n$. In other words, the spine of
$\mathscr{A}$ is defined as the set of points in $\mathbb{R}^n$
where the piecewise affine linear function $f_{trop}$ is not
differentiable.  
Let us denote by $\tau$ the convex subdivision of $\Delta$ dual
to the tropical variety $\Gamma$. Then the set of vertices of $\tau$ is precisely the image of the order mapping (i.e., $A'$).
By duality, this means that the convex subdivision $\tau =\cup _{i=l+1}^r \Delta_{v_i}$ of $\Delta$ is determined by a piecewise affine linear map $\nu : \Delta\longrightarrow \mathbb{R}$ so that:

%\newpage
\begin{itemize}
\item[(i)]\, $\nu_{\mid \Delta_{v_i}}$ is affine linear for each $v_i$,
\item[(ii)]\, if $\nu_{\mid U}$ is affine linear for some open set $U\subset
  \Delta$, then there  exists $v_i$ such that $U\subset \Delta_{v_i}$.
\item[(iii)]\, $\nu (\alpha ) = - c_\alpha$ for any $\alpha \in \Ima (\ord )$.
\end{itemize}

\noindent We define the {\em generalized  $s$-Passare-Rullg\aa rd function} by the following:

\begin{definition}
Let $s = (s_1,\ldots ,s_l)\in \mathbb{R}_+^l$ and  $\nu_{PR}^s : A
\longrightarrow \mathbb{R}$   be
the function, called the generalized $s$-Passare-Rullg\aa rd function, is defined by:
\[
\nu_{PR}^s(\alpha ) = \left\{ \begin{array}{ll}
-c_{\alpha}& \mbox{if\, $\alpha\in \Iim (\ord )$}\\
 <\alpha_j , a_{v}>+b_{v}+s_j &\mbox{if\, $\alpha = \alpha_j$ \,for\, $j=1,\ldots ,l$},                    %{otherwise}
\end{array}
\right.
\]
where $\alpha_j\in\Delta_{v}$,\,  $\Delta_{v}\in \tau$ and $y = <x , a_{v}>+b_{v}$
is the equation of the hyperplane in $\mathbb{R}^n\times\mathbb{R}$ containing the
points of coordinates $(\beta ; -c_{\beta})$ with $\beta\in \Verte (\Delta_{v})$.
\end{definition}

\noindent Assume that we have a hypersurface $\mathring{V}\subset (\mathbb{C}^*)^n$ defined by the polynomial ${f(z)=\sum_{\alpha\in A}a_\alpha z^{\alpha}}$ with $a_\alpha\in \mathbb{C}^*$, $A$ a finite subset of $\mathbb{Z}^n$ and $z^{\alpha} = z_1^{\alpha_1}z_2^{\alpha_2}\ldots z_n^{\alpha_n}$. We denote by $\Delta$ the convex hull of $A$ in $\mathbb{R}^n$ which is the Newton polytope of $f$. We can consider  the family of
 hypersurfaces $\mathring{V}_{f_{(t;\,  s)}}\subset (\mathbb{C}^*)^n $ defined by the following family of polynomials :
\begin{equation}\label{(3)}
f_{(t;\,  s)}(z) =
\sum_{\alpha\in A}\xi_\alpha t^{\nu^s_{PR}(\alpha )} z^{\alpha},%\quad\quad\quad\quad\quad\quad\quad\quad\quad\quad (3)
\end{equation}
with $\xi_\alpha = a_\alpha e^{\nu^s_{PR}(\alpha )}$, and we view this family as a deformation of $f$.

Let us denote by  $\mathscr{C}oh_A(\Delta )$  the set of coherent (i.e. convex) triangulations of $\Delta$ such that the set of vertices of all its elements is  contained in  $A$.  For each $\tau\in \mathscr{C}oh_A(\Delta )$, assume $\nu : \Delta \rightarrow \mathbb{R}$  is a convex function defining $\tau$.
 Let  $f^{(\tau )}$ be the non-Archimedean polynomial defined by:
 $$
 f^{(\tau )}(z) = \sum_{\alpha\in A} a_{\alpha}t^{\nu (\alpha )}z^{\alpha}.
 $$
We denote by $co\mathscr{A}_{\mathbb{C}}(f)$ (resp. $co\mathscr{A}_{\mathbb{K}}(f)$) the  complex coamoeba (resp. non-Archimedean coamoeba) of the hypersurface with defining polynomial $f$.

\section{Phase tropical hypersurfaces}

%
%%
%%%
%%%%
%%%%%
%

%
%
%
%
%
%
%
%

%\vspace{1cm}

\subsection{Phase tropical hypersurfaces} ${}$

\vspace{0.2cm}

For every  strictly positive real  number $t$ we define the self diffeomorphism $H_t$ of $(\mathbb{C}^*)^n$ by :
\[
\begin{array}{ccccl}
H_t&:&(\mathbb{C}^*)^n&\longrightarrow&(\mathbb{C}^*)^n\\
&&(z_1,\ldots ,z_n)&\longmapsto&\left(\mid z_1\mid^{-\frac{1}{\log t}}\dfrac{z_1}{\mid
  z_1\mid},\ldots ,\mid z_n\mid^{-\frac{1}{\log t}}\dfrac{z_n}{\mid z_n\mid} \right).
\end{array}
\]
This defines a new  complex structure on $(\mathbb{C}^*)^n$
denoted by $J_t = (dH_t)^{-1}\circ J\circ (dH_t)$ where $J$ is the
standard complex structure.

\noindent A $J_t$-holomorphic hypersurface $\mathring{V}_t$ is a 
holomorphic hypersurface with respect to the $J_t$ complex structure on
$(\mathbb{C}^*)^n$. It is equivalent to say that $\mathring{V}_t = H_t(\mathring{V})$ where
$\mathring{V}\subset (\mathbb{C}^*)^n$ is an holomorphic hypersurface for the
standard complex structure $J$ on $(\mathbb{C}^*)^n$.

Recall that the Hausdorff distance between two closed subsets $A,
B$ of a metric space $(E, d)$ is defined by:
$$
d_H(A,B) = \max \{  \sup_{a\in A}d(a,B),\sup_{b\in B}d(A,b)\}.
$$
Here $E =\mathbb{R}^n\times (S^1)^n$ is equipped  with the distance
defined as the product of the
Euclidean metric on $\mathbb{R}^n$ and the flat metric on $(S^1)^n$.

\begin{definition} A phase tropical hypersurface $\mathring{V}_{\infty}\subset
  (\mathbb{C}^*)^n$ is the limit (with respect to the Hausdorff
  metric on compact sets in $(\mathbb{C}^*)^n$) of a  sequence of a
  $J_t$-holomorphic hypersurfaces $\mathring{V}_t\subset (\mathbb{C}^*)^n$ when
  $t$ tends to $\infty$.
\end{definition}

We have an algebraic definition of phase tropical hypersurfaces  in case of curves (called complex tropical curves)(see \cite{M2-04}) as follows :

\noindent Let $ a \in \mathbb{K}^*$ be the Puiseux series
${a = \sum_{j\in A_a}\xi_jt^j}$ with $\xi\in \mathbb{C}^*$
and $A_a\subset \mathbb{R}$ is a well-ordered set with smallest element
Then we have a non-Archimedean valuation on $\mathbb{K}$ defined by
$\val (a ) = - \min A_a$. We complexify the valuation
map as follows :
\[
\begin{array}{ccccl}
w&:&\mathbb{K}^*&\longrightarrow&\mathbb{C}^*\\
&&a&\longmapsto&w(a ) = e^{\val (a )+i\arg (\xi_{-\val
    (a )})}.
\end{array}
\]
Let  $\Arg$ be the argument map $\mathbb{K}^*\rightarrow S^1$ defined by: for any  Puiseux series  ${a = \sum_{j\in A_a}\xi_jt^j}$, we set  $\Arg (a) = e^{i\arg (\xi_{-\val (a)})}$ (this map extends the map $\mathbb{C}^*\rightarrow S^1$ defined by $\rho e^{i\theta} \mapsto e^{i\theta}$ which we denote by $\Arg$).

\vspace{0.3cm}

\noindent Applying this map coordinatewise we obtain a map :
\[
\begin{array}{ccccl}
W:&(\mathbb{K}^*)^n&\longrightarrow&(\mathbb{C}^*)^n
\end{array}
\]

\begin{theorem}[Mikhalkin, 2002]  The set  $\mathring{V}_{\infty}\subset (\mathbb{C}^*)^n$
  is a phase tropical hypersurface if and only if there
  exists an algebraic hypersurface
  $\mathring{V}_{\mathbb{K}}\subset(\mathbb{K}^*)^n$ over $\mathbb{K}$ such that
  $\overline{W(\mathring{V}_{\mathbb{K}})} = \mathring{V}_{\infty}$, where $\overline{W(\mathring{V}_{\mathbb{K}})}$ is the closure of $W(\mathring{V}_{\mathbb{K}})$ in  $(\mathbb{C}^*)^n \approx \mathbb{R}^n\times (S^1)^n$ as  a Riemannian manifold with metric defined by the standard Euclidean metric of $\mathbb{R}^n$ and  the standard  flat metric of the real  torus.
\end{theorem}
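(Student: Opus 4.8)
The plan is to realize a polynomial over $\K$ as a bookkeeping device for exactly the data that survives the degeneration $z\mapsto H_t(z)$ as $t\to\infty$ --- a Newton polytope $\Delta$, the valuations $\val(a_\alpha)$ of the coefficients (which fix the tropical hypersurface $\Gamma_g$ and its dual subdivision $\tau$), and the arguments $\arg\xi_\alpha$ of the leading coefficients (which fix the phases along the facets of $\Gamma_g$) --- and to observe that the complexified valuation map $W$ reads off precisely this data. The equivalence should then follow by matching the two sides cell by cell over $\R^n$, the local model being a (twisted) complex hyperplane, i.e.\ a pair-of-pants.

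\emph{($\Leftarrow$).} Given $\mathring{V}_\K=\{g=0\}$ with $g(z)=\sum_{\alpha\in A}a_\alpha z^\alpha$, $a_\alpha\in\K^*$, I would write $a_\alpha=\xi_\alpha t^{-\val(a_\alpha)}+(\text{higher order in }t)$ with $\xi_\alpha\in\C^*$, so that $\Arg(a_\alpha)=e^{i\arg\xi_\alpha}$, and form for real $t\gg1$ the complex polynomial $f_t(z)=\sum_{\alpha\in A}\xi_\alpha t^{-\val(a_\alpha)}z^\alpha$, a deformation of the form~\eqref{(3)}. Put $W_t=\{f_t=0\}\subset(\C^*)^n$ and $\mathring{V}_t=H_t(W_t)$, which is $J_t$-holomorphic by construction; since $\Log\circ H_t=-\tfrac1{\log t}\Log$ while $H_t$ leaves every argument unchanged, $\mathring{V}_t$ is the $(1/\log t)$-rescaled amoeba of $W_t$ decorated by the phases carried by $W_t$. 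I would then prove $d_H(\mathring{V}_t,\overline{W(\mathring{V}_\K)})\to0$ by tropical localization: (a) the projection to $\R^n$ converges to $\Gamma_g=\Val(\mathring{V}_\K)$, by Maslov dequantization and Kapranov's theorem; (b) over the relative interior of each facet $E^*_{\alpha\beta}$ of $\Gamma_g$ the polynomial $f_t$ is dominated by the two monomials indexed by the endpoints $\alpha,\beta$ of the dual edge, so there $\mathring{V}_t$ is a translate of a complex hyperplane whose phase is governed by $\xi_\alpha/\xi_\beta$, which is exactly the phase $W$ attaches to the corresponding sheet of $W(\mathring{V}_\K)$, and the discarded higher-order terms contribute nothing in the limit; (c) near cells of higher codimension the limit is identified through the pair-of-pants local model.

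\emph{($\Rightarrow$).} Conversely, let $\mathring{V}_\infty=\lim_{t\to\infty}H_t(W_t)$ with $W_t=\{f_t=0\}$, $f_t(z)=\sum_\alpha a_\alpha(t)z^\alpha$. Normalizing $f_t$ (multiply by a monomial and a scalar so that $\max_\alpha|a_\alpha(t)|=1$) and passing to a subsequence, the supports stabilize to a fixed finite $A\subset\Z^n$ with Newton polytope $\Delta$; by compactness of $\Delta$ and of $(S^1)^A$, along a further subsequence the limits $r_\alpha:=\lim_t\tfrac{\log|a_\alpha(t)|}{\log t}\in[-\infty,0]$ and $e^{i\theta_\alpha}:=\lim_t\Arg a_\alpha(t)$ exist for all $\alpha$; discard the $\alpha$ with $r_\alpha=-\infty$, whose monomials are negligible. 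Set $a_\alpha:=e^{i\theta_\alpha}t^{r_\alpha}\in\K^*$, $g:=\sum_\alpha a_\alpha z^\alpha$, $\mathring{V}_\K:=\{g=0\}$. Applying ($\Leftarrow$) to $g$ identifies $\overline{W(\mathring{V}_\K)}$ with the Hausdorff limit of $H_t$ of the leading-term hypersurfaces; by the estimates of (a)--(c) the discarded terms do not perturb that limit, so it equals $\mathring{V}_\infty$. That $\mathring{V}_\infty$ is an honest phase tropical hypersurface --- of codimension one, not something lower-dimensional --- is what makes the normalization legitimate and keeps every facet of $\Gamma_g$ from collapsing, so that $\overline{W(\mathring{V}_\K)}=\mathring{V}_\infty$.

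\emph{Main obstacle.} The hard part will be the uniform convergence underlying (a)--(c): controlling $H_t(W_t)$ near the corner locus of $g_{trop}$, where several monomials of $f_t$ are of comparable size, and --- the genuinely new point beyond classical amoeba and tropical limits --- verifying that the \emph{argument} map, not merely the modulus map, converges there to the prescribed coamoeba structure. This is exactly where the reduction to the pair-of-pants local model and the tropical localization of Section~3 carry the weight; once those estimates are in hand, the converse direction reduces to a routine diagonal-subsequence and compactness argument.
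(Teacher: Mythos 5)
The paper does not prove this statement: it is recorded as a cited result of Mikhalkin (attributed ``[Mikhalkin, 2002]'', pointing to \cite{M2-04}), so there is no in-paper proof to compare your attempt against. What one can compare against is the surrounding machinery (the map $H_t$, the complexified valuation $W$, and the tropical-localization Proposition of Section~3), and your sketch is consistent with it and with the strategy Mikhalkin actually uses: reduce, cell by cell over $\R^n$, to a local model where two monomials dominate along the relative interior of a facet of $\Gamma_g$ (so $H_t(W_t)$ looks like a translated hyperplane with phase dictated by the ratio of leading coefficients, matching what $W$ assigns), and handle higher-codimension cells through the pair-of-pants model. Your ($\Rightarrow$) direction is also the right shape: normalize, extract subsequential limits $r_\alpha\in[-\infty,0]$ and $e^{i\theta_\alpha}\in S^1$ by compactness, build $g\in\K[z^{\pm1}]$, and use that $\mathring V_\infty$ is \emph{assumed} to exist as a Hausdorff limit so the subsequential identification is enough.

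That said, what you have is a sketch, not a proof, and you say so yourself. The genuine remaining content is precisely the uniform estimate you defer: that near the corner locus of $g_{\mathrm{trop}}$, where several monomials are comparable, the discarded higher-order terms in $t$ do not affect the Hausdorff limit of $H_t(W_t)$ \emph{including the argument coordinates}, not just the $\Log$ projection. Kapranov/Maslov dequantization gives (a), but (b) and (c) require a quantitative version of the localization Proposition (control of the error in \eqref{(4)} uniformly over $\Log^{-1}(U_{v_i})$, and a matching statement for $\Arg$). Two smaller points to nail down in ($\Rightarrow$): (i) discarding the $\alpha$ with $r_\alpha=-\infty$ must be shown not to change the Newton polytope of the surviving part, otherwise $\Gamma_g$ and hence $W(\mathring V_\K)$ would be wrong; (ii) you should rule out that the surviving $g$ is a monomial. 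Both use the hypothesis that $\mathring V_\infty$ has codimension one, which you gesture at but do not actually establish from the Hausdorff-limit hypothesis. With those estimates supplied, the outline would close.
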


Let $f_t(x)=\sum_{\alpha}a_\alpha t^{-v(\alpha)}z^j$ be a polynomial with a parameter $t$, and $\mathring{V}_t=\{f_t=0\}\subset (\mathbb C^*)^n$. The family of $f_t$ can be viewed as a single polynomial in $\mathbb K[z_1^{\pm 1},\cdots,z_n^{\pm 1}]$.  We have the following theorems (see  \cite{M2-04}, \cite{M3-04}, and \cite{R1-01}):

\begin{theorem}[Mikhalkin, Rullg{\aa}rd  (2001)]
The amoebas $\mathscr{A}_t$  of $\mathring{V}_t$ converge in the Hausdorff metric to the non-archimedean amoeba $\mathscr{A}_{\mathbb K}$ when $t\to \infty$.
\end{theorem}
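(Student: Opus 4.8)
The plan is to prove, for every fixed closed ball $\overline{B_R}\subset\mathbb{R}^n$ and every $\varepsilon>0$, that for all sufficiently large $t$ one has both $\mathscr{A}_t\cap\overline{B_R}\subset\Gamma_f+B_\varepsilon$ and $\Gamma_f\cap\overline{B_R}\subset\mathscr{A}_t+B_\varepsilon$; this is exactly the assertion $d_H(\mathscr{A}_t\cap\overline{B_R},\,\mathscr{A}_{\mathbb{K}}\cap\overline{B_R})<\varepsilon$, and letting $R\to\infty$ then gives convergence in the Hausdorff metric on compact sets. Here I read $\mathscr{A}_t$ as the image $\Log_t(\mathring{V}_t)$ under the base-$t$ logarithm $\Log_t:=(\log t)^{-1}\Log$ (the normalization under which the statement holds), and I use Kapranov's theorem in the form $\mathscr{A}_{\mathbb{K}}=\Gamma_f=\overline{\Val(\mathring{V}_{\mathbb{K}})}$, where $\Gamma_f$ is the corner locus of the tropical polynomial attached to $f\in\mathbb{K}[z_1^{\pm1},\dots,z_n^{\pm1}]$.

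\textbf{Step 1 (the amoebas stay near $\Gamma_f$).} I would first prove the inclusion $\mathscr{A}_t\cap\overline{B_R}\subset\Gamma_f+B_\varepsilon$ by a dominant-monomial estimate. If $w_0\notin\Gamma_f$, then a single exponent $\alpha_0\in\supp(f)$ strictly maximizes the tropical polynomial at $w_0$, so there are $\delta>0$ and $T$ with
\[
\bigl|a_{\alpha_0}t^{-v(\alpha_0)}z^{\alpha_0}\bigr|\ >\ \sum_{\alpha\neq\alpha_0}\bigl|a_{\alpha}t^{-v(\alpha)}z^{\alpha}\bigr|
\]
for every $t>T$ and every $z\in(\mathbb{C}^*)^n$ with $\Log_t(z)\in B_\delta(w_0)$; hence $f_t$ has no zero whose rescaled amoeba coordinate lies in $B_\delta(w_0)$. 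Since $\overline{B_R}\setminus(\Gamma_f+B_\varepsilon)$ is compact it is covered by finitely many such balls, and the desired inclusion follows uniformly.

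\textbf{Step 2 (every point of $\Gamma_f$ is approximated).} This is the hard direction. By Kapranov's theorem, a point $w_0$ in the relative interior of a maximal cell of $\Gamma_f$ equals $\Val(\zeta)$ for some $\zeta\in(\mathbb{K}^*)^n$ with $f(\zeta)=0$. Writing the coordinates of $\zeta$ as Puiseux series and truncating them at sufficiently high order produces, for each large real $t$, a point $z(t)\in(\mathbb{C}^*)^n$ with $\Log_t(z(t))\to w_0$ at which $|f_t(z(t))|$ is of strictly smaller order in $t$ than the two monomials realizing the maximum of the tropical polynomial at $w_0$. A Rouch\'e (or implicit-function) argument on a small polydisc about $z(t)$ — using that along this cell the equation $f_t=0$ reduces, modulo higher-order terms in $t$, to the binomial $a_\alpha t^{-v(\alpha)}z^\alpha+a_\beta t^{-v(\beta)}z^\beta=0$ attached to the corresponding edge of the dual subdivision $\tau$, and that this binomial has the expected torus of solutions — then promotes $z(t)$ to a genuine zero $z^\ast(t)\in\mathring{V}_t$ with $\Log_t(z^\ast(t))\to w_0$. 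A point $w_0$ lying in a lower-dimensional cell of $\Gamma_f$ is treated either by the same argument applied to the sub-sum of monomials dominating near $w_0$, or simply by approximating $w_0$ within $\Gamma_f$ by points of the adjacent maximal cells. The genuine obstacle here is analytic: passing from the formal Puiseux solution $\zeta$ to an honest complex zero of $f_t$ at finite $t$ with prescribed logarithmic coordinates requires real control — convergence of the Newton--Puiseux series, or a quantitative Rouch\'e estimate on shrinking polydiscs — rather than the formal algebra underlying Kapranov's theorem, and the binomial reduction along the edges of $\tau$ is what makes that control feasible.

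\textbf{Step 3 (uniformity and conclusion).} Given $\varepsilon>0$, cover the compact set $\Gamma_f\cap\overline{B_R}$ by finitely many balls $B_{\varepsilon/2}(w_1),\dots,B_{\varepsilon/2}(w_N)$ with $w_j\in\Gamma_f$. By Step 2 there is $T$ such that for $t>T$ each $w_j$ lies within distance $\varepsilon/2$ of $\mathscr{A}_t$, whence $\Gamma_f\cap\overline{B_R}\subset\mathscr{A}_t+B_\varepsilon$. Combined with Step 1 this yields $d_H(\mathscr{A}_t\cap\overline{B_R},\,\mathscr{A}_{\mathbb{K}}\cap\overline{B_R})<\varepsilon$ for all large $t$, and letting $R\to\infty$ finishes the proof.
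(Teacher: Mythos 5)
The paper does not prove this statement; it records it as a known result with references to Mikhalkin's pair-of-pants paper and Rullg\aa rd's thesis. So there is no in-paper argument to compare your sketch against, and I can only assess the sketch on its own terms.

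Your Step 1 (a single dominating monomial away from $\Gamma_f$ forces $f_t\neq 0$ in a $\Log_t$-neighborhood) is correct and is the easy inclusion. Step 3 is a routine compactness patch-up. The substance is all in Step 2, and there you have a genuine gap which you partly acknowledge but which is worse than you suggest. First, the detour through Kapranov plus truncation is shaky over the field the paper actually uses: $\mathbb{K}$ consists of Puiseux series with well-ordered \emph{real} exponent sets. Such a series need not be locally finite (the exponents may accumulate from below), so ``truncating at sufficiently high order'' does not obviously produce, for each real $t$, a point $z(t)$ at which $|f_t(z(t))|$ is of the order you claim, and there is no convergence statement for Newton--Puiseux in this generality that you can lean on. Second, the Rouch\'e step is stated as if one-variable: to promote an approximate zero to an honest one in $(\mathbb{C}^*)^n$ you need either a quantitative several-variable Rouch\'e or an implicit-function/contraction estimate on a product of discs, and you must verify that the remainder $f_t-\bigl(a_\alpha t^{-v(\alpha)}z^\alpha+a_\beta t^{-v(\beta)}z^\beta\bigr)$ is small \emph{relative to the gradient} of the binomial on that polydisc, uniformly as $\Log_t(z)$ ranges over a compact piece of the open cell. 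Neither estimate is supplied.

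A cleaner route, and essentially the one in the cited sources, avoids Kapranov altogether: for $w_0$ interior to a facet of $\Gamma_f$, the binomial $a_\alpha t^{-v(\alpha)}z^\alpha+a_\beta t^{-v(\beta)}z^\beta=0$ already has a real $(2n-1)$-parameter family of solutions in $(\mathbb{C}^*)^n$ whose $\Log_t$-image is exactly $w_0$ (up to $o(1)$), and the remaining monomials are uniformly small there for $t\gg 0$; one then makes the binomial equation the zeroth-order term of a contraction in one chosen coordinate and reads off an actual zero of $f_t$ with $\Log_t$-image within $\varepsilon$ of $w_0$. Lower-dimensional cells are then handled, as you say, by approximating within $\Gamma_f$ by facet points; this is not circular provided the facet estimate is uniform on compacta, which the contraction argument gives. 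If you rewrite Step 2 along these lines, dropping the Kapranov/truncation detour and supplying the quantitative perturbation estimate, the sketch becomes a complete proof.
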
 

\begin{theorem}[Mikhalkin]
The sets $H_t(\mathring{V}_t)$ converge in the Hausdorff metric to $W(\mathring{V}_{\mathbb K})$ when $t\to\infty$.
\end{theorem}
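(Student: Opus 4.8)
The plan is to prove the two inclusions of limit sets separately, working componentwise after a reduction to a local model near each point of the tropical variety. First I would set up the correspondence between the one-parameter family $f_t(z) = \sum_\alpha a_\alpha t^{-\nu(\alpha)} z^\alpha$ and the single Puiseux polynomial $F(z) = \sum_\alpha a_\alpha t^{-\nu(\alpha)} z^\alpha \in \mathbb{K}[z_1^{\pm 1},\dots,z_n^{\pm 1}]$, whose zero locus is $\mathring{V}_{\mathbb{K}}$. The key observation is that, under the change of variables $w_i = H_t$, a point $z \in (\mathbb{C}^*)^n$ with $\Log(z) = \log t \cdot x$ (i.e.\ $|z_i| = t^{x_i}$) is sent to $H_t(z)$ with $|H_t(z)_i| = t^{x_i \cdot (-1/\log t)} \cdot \text{(unit rescaling)}$, so that $\Log(H_t(z)) = -x + o(1)$ as $t \to \infty$ while $\Arg(H_t(z)_i) = \arg(z_i)$ is unchanged. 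Thus $H_t$ converts the logarithmic blow-up rate $\log t$ into an $O(1)$ scale, and the amoeba of $H_t(\mathring{V}_t)$ is (up to sign and $o(1)$) the rescaled amoeba $\tfrac{1}{\log t}\mathscr{A}_t$, which by the Mikhalkin–Rullgård theorem already quoted converges to the tropical variety $\mathscr{A}_{\mathbb{K}} = \Gamma$. This pins down the modulus part of every limit point.

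Next I would analyze the argument (phase) part. Fix a point $p = W(a)$ for some $a = (a_1,\dots,a_n) \in \mathring{V}_{\mathbb{K}}$, so $\Log(p) = -\val(a) =: x \in \Gamma$ and $\arg(p_i) = \arg(\xi^{(i)}_{-\val(a_i)})$, the argument of the leading coefficient of the Puiseux series $a_i$. To show $p$ is a limit of points $H_t(z^{(t)})$ with $f_t(z^{(t)}) = 0$: substitute $z_i = a_i(t)$ (the truncation/evaluation of the Puiseux series at the parameter $t$) into $f_t$ and observe that $f_t(a(t)) = 0$ for all $t$ by construction — this is precisely the meaning of $a \in \mathring{V}_{\mathbb{K}}$. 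Then $H_t(a(t))$ has modulus $\to t^{x_i}$-corrected-to-$O(1)$ (handled above) and argument converging to $\arg(\xi^{(i)}_{-\val(a_i)}) = \arg(p_i)$ because in a Puiseux series the lowest-order term dominates the argument as $t \to \infty$. Conversely, for the reverse inclusion I would use the local pair-of-pants / tropical-localization picture: near a point $x$ in the relative interior of a facet of $\Gamma$ dual to an edge $E_{\alpha\beta}$ of the subdivision $\tau$, the polynomial $f_t$ is, after the $H_t$ rescaling, asymptotically a binomial $a_\alpha t^{-\nu(\alpha)} z^\alpha + a_\beta t^{-\nu(\beta)} z^\beta$ whose zero set is cut out by $(z/|z|)^{\alpha - \beta} = $ a fixed phase, exactly matching $W$ of the binomial's $\mathbb{K}$-zero set; near lower-dimensional strata one uses the triangulation-dual local model with $k+1$ monomials. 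Patching these local models with a partition of unity and controlling the error terms (the non-dominant monomials, which are smaller by a definite power of $t$ uniformly on compact subsets avoiding a shrinking neighborhood of $\Gamma$) gives that any limit point of $H_t(\mathring{V}_t)$ lies in $\overline{W(\mathring{V}_{\mathbb{K}})}$.

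Finally I would assemble a compactness/diagonal argument: fix a compact $K \subset (\mathbb{C}^*)^n$, cover $K \cap (\text{nbhd of }\Gamma\times(S^1)^n)$ by finitely many local charts, and in each chart use the uniform error estimate to conclude Hausdorff convergence on $K$; since this holds for an exhaustion by compacts, we get convergence on compact sets. The main obstacle I expect is the uniformity of the error control near the lower-dimensional strata of $\Gamma$, where several monomials of $f_t$ become comparable in size and the simple binomial model breaks down — there one needs the full strength of Mikhalkin's tropical localization (the fact that the rescaled $f_t$ converges, locally uniformly on the relevant chart, to the ``initial'' polynomial attached to each cell of $\tau$) together with an implicit-function-theorem argument showing that the genuine zero set stays within $o(1)$ (in the product metric) of the model zero set. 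Everything else — the modulus convergence, the leading-argument computation, the binomial case — is essentially bookkeeping with valuations and the definitions of $H_t$ and $W$.
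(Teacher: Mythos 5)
The paper does not prove this theorem: it is quoted as a result of Mikhalkin, with pointers to \cite{M2-04}, \cite{M3-04}, and \cite{R1-01}, and there is no in-text argument to compare against. Your sketch reconstructs the structure of Mikhalkin's own proof correctly: moduli are controlled by the Mikhalkin--Rullg{\aa}rd convergence of the rescaled amoebas, one inclusion of limit sets comes from substituting a $\mathbb{K}$-solution and applying $H_t$, and the reverse inclusion comes from tropical localization (the initial-form binomials near top-dimensional cells, the full initial polynomial near lower strata) patched together with uniform error control and the implicit function theorem. That is the right route.

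Two steps are stated more casually than they can bear. First, the claim that ``$f_t(a(t))=0$ for all $t$ by construction --- this is precisely the meaning of $a\in\mathring{V}_{\mathbb{K}}$'' conflates a formal identity in $\mathbb{K}$ with an honest one-parameter family of zeros. With the paper's $\mathbb{K}$ (series $\sum_{j\in A_a}\xi_j t^j$ with $A_a$ merely well-ordered) there is no guarantee that $a_i(t)$ defines a function of a real parameter at all. You must either restrict to convergent (or finitely supported) Puiseux series and argue this suffices for Hausdorff density of $W(\mathring{V}_{\mathbb{K}})$, or truncate $a$ at a high order and then produce a nearby genuine point $z^{(t)}\in\mathring{V}_t$ by the implicit function theorem --- essentially the same IFT step you already invoke in the reverse inclusion, so it is not extra machinery, but it is a step you cannot skip. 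Second, the assertion that ``the lowest-order term dominates the argument as $t\to\infty$'' is sign-sensitive: with $A_a$ well-ordered (bounded below) the lowest-order term dominates as $t\to 0^+$, not as $t\to\infty$. The compensating minus sign in the paper's $H_t$ (the exponent $-1/\log t$) is what makes the bookkeeping close, and the paper itself oscillates between $t\to 0$ and $t\to\infty$ conventions; a careful version of your argument must fix one convention and carry the signs consistently through $\val$, $\nu$, $W$, $H_t$ and the exponent $t^{-\nu(\alpha)}$ in $f_t$, or the modulus computation and the leading-coefficient phase computation will point in opposite directions.
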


\subsection{Tropical localization} ${}$

\vspace{0.2cm}
\noindent  Let $\nu$ be the piecewise affine linear map  defined in Section 2, and 
 $\tilde{\Delta}$ be the extended polyhedron of $\Delta$ associated
to $\nu$, that is the convex hull of the set $\{  (\alpha ,u)\in
\Delta\times \mathbb{R} \,|\, u\geq \nu (\alpha ) \}$. For any
$\Delta_{v_i}\in \tau$, let $\lambda (x) = <x, a_{v_i}> + b_{v_i}$ be the affine linear
map defined on $\Delta$ such that $\lambda_{\mid \Delta_{v_i}} = \nu_{\mid
  \Delta_{v_i}}$ where $<~,~>$ is the scalar product in $\mathbb{R}^n$, $a_{v_i}=(a_{v_i,\, 1},\ldots ,a_{v_i,\, n})\in\mathbb{R}^n$ (which is the coordinates of the vertex of the spine  $\Gamma$, dual to $\Delta_{v_i}$), and $b_{v_i}$ is a real number. Let $s\in \mathbb{R}_+^l$ as above and put $\nu ' = \nu_{PR}^{(s)} -\lambda$ and we define the family of polynomials
 $\{ {f'}_{(t;\, s)} \}_{t\in (0,\, \frac{1}{e}]}$ by:
$$
f_{(t,s)} '(z) =\sum_{\alpha\in A}\xi_{\alpha}t^{\nu '(\alpha )}z^{\alpha},
$$
where $\xi_{\alpha}\in \mathbb{C}$. Then we have:
\begin{align*}
&f_{(t,s)} '(z) =t^{-b_v}\sum_{\alpha\in A}\xi_{\alpha}t^{\nu_{PR}^{(s)} (\alpha
  )}(z_1t^{-a_{v_i,\, 1}})^{\alpha_1}\ldots (z_nt^{-a_{v_i,\,
  n}})^{\alpha_n}\nonumber \\
&\phantom{f_{(t,s)} '(z)}=t^{-b_v}f_{(t;\, s)}\circ \Phi^{-1}_{\Delta_{v_i},\, t}(z),\nonumber
\end{align*}
 where $f_{(t;\, s)}$  is the polynomial defined in \eqref{(3)}, and $\Phi_{\Delta_{v_i},\, t}$ is the self diffeomorphism of
 $(\mathbb{C}^*)^n$ defined by:
\[
\begin{array}{ccccl}
\Phi_{\Delta_{v_i},\, t}&:&(\mathbb{C}^*)^n&\longrightarrow&(\mathbb{C}^*)^n\\
&&(z_1,\ldots ,z_n)&\longmapsto&(z_1t^{a_{v_i,\, 1}},\ldots ,z_nt^{a_{v_i,\,
  n}} ).
\end{array}
\]
This means that the polynomials ${f'}_{(t;\, s)}$ and $f_{(t;\, s)}\circ
\Phi^{-1}_{\Delta_{v_i},\, t}$ define the same hypersurface. So we have:
$$
\mathring{V}_{{f'}_{(t;\, s)}} = \mathring{V}_{f_{(t;\, s)}\circ
\Phi^{-1}_{\Delta_{v_i},\, t}} = \Phi_{\Delta_{v_i},\, t}(\mathring{V}_{f_{(t;\, s)}}),
$$
where  $\mathring{V}_g$ denotes  algebraic hypersurface in $(\mathbb{C}^*)^n$ with defining polynomial $g$.
Let $U_{v_i}$ be a small ball in $\mathbb{R}^n$ with center the
vertex of $\Gamma_{(t;\, s)}$ dual to $\Delta_{v_i}$ where $\Gamma_{(t;\, s)}$ is the
spine of the amoeba $\mathscr{A}_{H_t(\mathring{V}_{f_{(t;\, s)}})}$ where $H_t$
denotes the self diffeomorphism of $(\mathbb{C}^*)^n$ defined as in Subsection 3.1,
  and $\Log_t = \Log \circ H_t$.
Let $f_{(t;\, s)}^{\Delta_{v_i}}$ be the truncation of $f_{(t;\, s)}$ to $\Delta_{v_i}$, and 
$\mathring{V}_{\infty ,\,\Delta_{v_i}}$ be the complex tropical hypersurface with tropical coefficients of index $\alpha\in\Delta_{v_i}$ (i.e., $\mathring{V}_{\infty ,\,\Delta_{v_i}} = \lim_{t\rightarrow 0} H_t(\mathring{V}_{f_{(t;\, s)}^{\Delta_{v_i}}})$). Using Kapranov's theorem (see \cite{K-00}), we obtain the following Proposition (called a tropical localization by Mikhalkin, see \cite{M2-04}):

\begin{proposition} Let $s$ be in $\mathbb{R}_+^l$.  For any $\varepsilon >0$ there exists $t_0$ such that if $t\geq t_0$ then the image under $\Phi_{\Delta_{v_i},\, t}\circ H_t^{-1}$ of $H_t(\mathring{V}_{f_{(t;\, s)}})\cap \Log^{-1}(U_{v_i})$ is contained in the $\varepsilon$-neighborhood of the image under $\Phi_{\Delta_{v_i},\, t}\circ H_t^{-1}$ of the phase tropical hypersurface $\mathring{V}_{\infty ,\,\Delta_{v_i}}$ corresponding to the family $\{\mathring{V}_{f_{(t;\, s)}}\}_{t}$, with respect to the product metric in $(\mathbb{C}^*)^n\approx\mathbb{R}^n\times (S^1)^n$.
\end{proposition}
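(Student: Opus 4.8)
The plan is to use the change of coordinates $\Phi_{\Delta_{v_i},t}\circ H_t^{-1}$ to recentre the vertex of $\Gamma_{(t;s)}$ dual to $\Delta_{v_i}$ at the origin, thereby reducing the statement to a two--sided comparison between the recentred polynomial $f'_{(t;s)}$ and its truncation to $\Delta_{v_i}$ over the (dilated) ball onto which $\Log^{-1}(U_{v_i})$ is mapped.

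First I would record the effect of $\Phi_{\Delta_{v_i},t}\circ H_t^{-1}$ on $(\mathbb{C}^*)^n\cong\mathbb{R}^n\times(S^1)^n$. The map $H_t$ is the identity on the $(S^1)^n$--factor and multiplication by $-1/\log t$ on the $\mathbb{R}^n$--factor, while $\Phi_{\Delta_{v_i},t}$ is the identity on $(S^1)^n$ and the translation by $(\log t)\,a_{v_i}$ on $\mathbb{R}^n$; hence $\Phi_{\Delta_{v_i},t}\circ H_t^{-1}$ is an isometry of the $(S^1)^n$--factor and, on the $\mathbb{R}^n$--factor, the dilation by $|\log t|$ composed with the translation sending the vertex of $\Gamma_{(t;s)}$ dual to $\Delta_{v_i}$ to $0$. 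Using the identity $\mathring{V}_{f'_{(t;s)}}=\Phi_{\Delta_{v_i},t}(\mathring{V}_{f_{(t;s)}})$ from the paragraph preceding the statement (and its analogue for the truncation $f_{(t;s)}^{\Delta_{v_i}}$), the map $\Phi_{\Delta_{v_i},t}\circ H_t^{-1}$ carries $H_t(\mathring{V}_{f_{(t;s)}})\cap\Log^{-1}(U_{v_i})$ onto the part of $\mathring{V}_{f'_{(t;s)}}$ lying over a ball $B_t$ about $0$ of radius $\asymp|\log t|$, and it carries $H_t(\mathring{V}_{f_{(t;s)}^{\Delta_{v_i}}})$ onto the corresponding part of the zero set of the recentred truncation. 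Since $\mathring{V}_{\infty,\Delta_{v_i}}$ is, by definition, the limit of the sets $H_t(\mathring{V}_{f_{(t;s)}^{\Delta_{v_i}}})$, it is enough to show that $\mathring{V}_{f'_{(t;s)}}$ lies, over $B_t$, in the $\varepsilon$--neighbourhood of the zero set of its truncation to $\Delta_{v_i}$, for $t\gg 0$.

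The key point is the sign of the exponents of $f'_{(t;s)}(z)=\sum_{\alpha\in A}\xi_\alpha t^{\nu'(\alpha)}z^\alpha$. Because $\nu'=\nu_{PR}^{(s)}-\lambda$ and $\lambda$ is the affine function supporting the convex function $\nu_{PR}^{(s)}$ along $\Delta_{v_i}$, one has $\nu'\ge 0$ everywhere, $\nu'$ vanishes on $\Verte(\Delta_{v_i})$, and $\nu'$ is bounded away from $0$ on $A\setminus\Delta_{v_i}$ (here coherence of $\tau$ and $s\in\mathbb{R}_+^l$ are used). Hence, as $t\to\infty$, every monomial of $f'_{(t;s)}$ of index outside $\Delta_{v_i}$ has coefficient tending to $0$, so on any fixed compact set $f'_{(t;s)}$ approaches its $\Delta_{v_i}$--truncation and, by Kapranov's theorem, its zero set approaches that of the truncation. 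To upgrade this to the expanding ball $B_t$ I would choose the radius of $U_{v_i}$ small enough, in terms of $\min_{\alpha\in A\setminus\Delta_{v_i}}\nu'(\alpha)$, that every tropical wall of $f'_{(t;s)}$ involving a monomial of index outside $\Delta_{v_i}$ --- which lies at distance $\asymp|\log t|$ from $0$ --- remains outside $B_t$; over $B_t$ this forces the spine of $\mathring{V}_{f'_{(t;s)}}$ to coincide with the spine of the truncation and yields the uniform domination $|\xi_\alpha t^{\nu'(\alpha)}z^\alpha|\ll\max_{\beta\in\Verte(\Delta_{v_i})}|\xi_\beta z^\beta|$ for the omitted monomials throughout $\Log^{-1}(B_t)$. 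A Rouch\'e--Hurwitz argument, applied both to the modulus and to the argument coordinates and combined with the standard uniform bound on the Hausdorff distance from an amoeba to its spine, then delivers the inclusion in the $\varepsilon$--neighbourhood.

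I expect the main obstacle to be precisely this last, uniform, step: the ball $B_t$ has diameter $\asymp|\log t|\to\infty$, so continuity of roots on a fixed compact set is not enough, and one has to exploit the fact that the unwanted tropical walls recede at the comparable rate $|\log t|$. Balancing the two rates, i.e.\ fixing the size of $U_{v_i}$ in terms of the values $\nu'(\alpha)$ with $\alpha\notin\Delta_{v_i}$, is exactly what makes the domination estimate hold uniformly over $B_t$, and amounts to re-running Mikhalkin's tropical localisation in this normalisation.
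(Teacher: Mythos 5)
Your proposal follows essentially the same route as the paper's own proof: you recentre via $\Phi_{\Delta_{v_i},t}\circ H_t^{-1}$, decompose the recentred polynomial $f'_{(t;s)}$ into the $\Delta_{v_i}$-truncation plus a remainder, observe that $\nu'=\nu_{PR}^{(s)}-\lambda$ vanishes on $\Verte(\Delta_{v_i})$ and is strictly positive on $A\setminus\Delta_{v_i}$ so the remainder monomials are damped, and conclude. The paper does the same thing with the explicit decomposition \eqref{(4)} and the commutative square \eqref{(5)} in place of your coordinate computation.

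The one place you go beyond the paper is worth flagging. You correctly observe that $\Log\circ\Phi_{\Delta_{v_i},t}\circ H_t^{-1}$ is a dilation by $|\log t|$, so the statement is really about the whole expanding ball $B_t$ of radius $\asymp|\log t|$, and you note that the naive "continuity of roots on a fixed compact" is not enough: one has to balance the rate at which $B_t$ grows against the rate at which the unwanted tropical walls (those supported on monomials outside $\Delta_{v_i}$) recede, by shrinking $U_{v_i}$ in terms of $\min_{\alpha\notin\Delta_{v_i}}\nu'(\alpha)$. The paper's proof does not spell this out — it asserts the domination of the $\Delta_{v_i}$ monomials for $\Log_t(z)\in\phi_{\Delta_{v_i}}(U_{v_i})$ and passes directly to the $\varepsilon$-neighbourhood conclusion. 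Your rate-balancing discussion and the appeal to a Rouch\'e-type estimate together with the uniform amoeba--spine bound is exactly what is needed to make that passage rigorous, and is consistent with how Mikhalkin's tropical localisation is usually argued. So I would say your proposal is not merely equivalent but slightly more complete on the uniformity issue; no gap.

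One small point to keep straight when writing it up: the last step, replacing the zero set of the recentred truncation by $\Phi_{\Delta_{v_i},t}\circ H_t^{-1}(\mathring V_{\infty,\Delta_{v_i}})$, uses that $\mathring V_{\infty,\Delta_{v_i}}$ is (up to an $SL_n(\mathbb Z)$ change of coordinates and translation to $a_{v_i}$) a phase tropical hyperplane, hence invariant under the recentred dilation $\Log\circ\Phi_{\Delta_{v_i},t}\circ H_t^{-1}$ and under the identity on $(S^1)^n$. Without that scale invariance, Hausdorff convergence of $H_t(\mathring V_{f^{\Delta_{v_i}}_{(t;s)}})$ to $\mathring V_{\infty,\Delta_{v_i}}$ on compacts would not survive composition with the expanding map. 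You implicitly use it; it deserves a sentence.
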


\begin{proof}
By decomposition of $f_{(t,s)} '$, we obtain:
\begin{equation}\label{(4)}
f_{(t,s)} '(z) =t^{-b_v}\sum_{\alpha\in \Delta_v\cap A}\xi_{\alpha}t^{\nu
  (\alpha ) -<\alpha ,a_v>}z^{\alpha} \,\,  +\,\, \sum_{\alpha\in
  A\setminus \Delta_v}  \xi_{\alpha}t^{\nu
  (\alpha ) -<\alpha ,a_v>-b_v}z^{\alpha}. %\qquad\qquad\qquad (4)
\end{equation}
On the other hand,  we have the following commutative diagram:
\begin{equation}\label{(5)}
\xymatrix{
(\mathbb{C}^*)^n\ar[rr]^{\Phi_{\Delta_v,t}}\ar[d]_{\Log_t}&&
(\mathbb{C}^*)^n\ar[d]^{\Log_t}\cr
\mathbb{R}^n\ar[rr]^{\phi_{\Delta_v}}&&\mathbb{R}^n,
}% \qquad\qquad\qquad\qquad    (5)  \nonumber
\end{equation}
such that if $v=(a_{v,\, 1},\ldots ,a_{v,\, n})\in \mathbb{R}^n$ is
the vertex of the tropical hypersurface $\Gamma$ dual to the
element $\Delta_v$ of the subdivision $\tau$, then
$\phi_{\Delta_v}(x_1,\ldots , x_n)=(x_1-a_{v,\, 1},\ldots
,x_n-a_{v,\, n})$. Let $U_v$  be a small open ball in
$\mathbb{R}^n$ centered at $v$.

\noindent Assume that $\Log_t(z)\in \phi_{\Delta_v}(U_v)$ and $z$ is not
singular in $\mathring{V}_{t}$. Then the second sum in \eqref{(4)} converges to zero
when $t$ goes  to infinity, because by the choice of $z$ and $U_v$, the
tropical monomials in $f_{trop,\, (t,s)}'$, corresponding to lattice points
of $\Delta_v$,  dominates the monomials corresponding to lattice points
of $A\setminus \Delta_v$. But the first sum in \eqref{(4)} is just a
polynomial defining the hypersurface $ \Phi_{\Delta_v,\, t} (V_{f_{(t,s)}^{\Delta_v}})$. 

\noindent By the commutativity of diagram \eqref{(5)}, if
$z\in \mathring{V}_{f_t'}$ is such that $\Log_t(z)\in \phi_{\Delta_v}(U_v)$
then $\Log_t\circ \Phi_{\Delta_v,t}^{-1}(z)\in U_v$, and hence
$H_t( \Phi_{\Delta_v,t}^{-1}(z))\in \Log^{-1}(U_v)$. So, the image under 
$\Phi_{\Delta_v,\, t}\circ H_t^{-1}$ of
$H_t(\mathring{V}_{f_{(t,s)}})\cap \Log^{-1}(U_v)$ is contained in an
$\varepsilon$-neighborhood of  the image under  $\Phi_{\Delta_v,\, t}\circ H_t^{-1}$ of
  $H_t(\mathring{V}_{f_{(t,s)}^{\Delta_v}})$ for
sufficiently large $t$ and the proposition is proved because
$\mathring{V}_{\infty ,\,\Delta_v}$ is the limit when $t$ tends to $\infty$ of
the sequence of $J_t$-holomorphic hypersurfaces
$H_t(V_{f_{(t,s)}^{\Delta_v}})$ (by taking a discrete sequence  $t_k$
converging to $\infty$ if necessary). In particular the set of arguments of $\mathring{V}_{\infty ,\, f}\cap \Log^{-1}(v)$ 
is contained in  the set of arguments of $V_{\infty ,\,\Delta_v}$ i.e.,  $\Arg (\mathring{V}_{\infty ,\, f}\cap \Log^{-1}(v))\subseteq \Arg (\mathring{V}_{\infty ,\,\Delta_v})$. If it is not the case, we can get away too after applying $\Phi_{\Delta_v,\, t}\circ H_t^{-1}$ for sufficiently large $t$.
 
\end{proof}

\subsection{Toric varieties} ${}$
\vspace{0.2cm}

\noindent To every convex polyhedron $\Delta\subset \mathbb R^n$ with integer vertices, there is a complex toric variety ${X}_{\Delta}$ containing $(\mathbb C^*)^n$. Indeed, we can consider the Veronese embedding $\rho: (\mathbb C^*)^n \rightarrow \mathbb{CP}^{\# (\Delta\cap \mathbb Z^n)-1}$ defined by the monomial map associated to $\Delta\cap \mathbb Z^n$: $(z_1,\cdots,z_n)\mapsto z_1^{\alpha_1}z_2^{\alpha_2}\cdots z_n^{\alpha_n}$, for each $\alpha:=(\alpha_1,\cdots,\alpha_n)\in \Delta\cap \mathbb Z^n$; and   $X_{\Delta}$ is defined as the closure of the image of $(\mathbb C^*)^n$. Then  the Fubini-Study symplectic form on the projective spaces $\mathbb{CP}^{\# (\Delta\cap \mathbb Z^n)-1}$ defines a natural symplectic form on $X_{\Delta}$. In particular we obtain a symplectic form $\omega_{\Delta}$ on $(\mathbb C^*)^n$ invariant under the Hamiltonian action of the real torus $(S^1)^n$. This gives a moment map $\mu_{\Delta}$ with respect to $\omega_{\Delta}$:
\[
\begin{matrix}
\mu_{\Delta}&:&(\mathbb C^*)^n&\longrightarrow& \Delta\\
            & &z &\mapsto &\dfrac{\displaystyle\sum_{\alpha\in \Delta\cap \mathbb Z^n} \sum_{i=1}^n \alpha_i|z_i^{2\alpha_i}|}{\displaystyle\sum_{\alpha\in \Delta\cap \mathbb Z^n}\sum_{i=1}^n|z_i^{2\alpha_i}|},
\end{matrix}
\]
%where  $<\alpha, |z^{2\alpha}|> := \sum_{i=1}^n \alpha_i|z_i^{2\alpha_i}|$, and $|z^{2\alpha}| := \sum_{i=1}^n|z_i^{2\alpha_i}|$.
which  is an embedding with image the interior of $\Delta$. 
\begin{equation}
\xymatrix{
(\mathbb{C}^*)^n\ar[rr]^{\Log}\ar[dr]_{\mu_{\Delta}}&&\mathbb{R}^n\ar[dl]^{\Psi_\Delta}\cr
&\Delta.
}\nonumber
\end{equation}
The maps $\Log$ and $\mu_{\Delta}$ both have orbits $(S^1)^n$ as fibers, and we obtain a reparametrization of $\mathbb{R}^n$  which we denote by  $\Psi_{\Delta}$ (see \cite{GKZ-94}).

\begin{definition}
Let $\Gamma\subset \mathbb R^n$ be an $n$-dimensional balanced polyhedral complex, and $\Delta$ its dual convex lattice polyhedron. $\overline{\Gamma}\subset \Delta$ is the compactification of $\Gamma$ by taking $\Psi_{\Delta}(\Gamma)$ in $\Delta$. $\overline{\Gamma}\backslash \Psi_{\Delta}(\Gamma)$ is called the boundary of $\overline{\Gamma}$. 
%By abuse of notation and for  convenience we will identify $\Gamma$ and $\Psi_{\Delta}(\Gamma)$.
\end{definition}

Let $f$ be a Laurent polynomial in $\mathbb C[z_1^{\pm 1},\cdots , z_n^{\pm 1}]$, and $\Delta$ be its Newton polytope. Let $\mathring{V} :=\{z\in (\mathbb C^*)^n\,|\,f(z)=0\}$ be the hypersurface in $(\mathbb C^*)^n$ with defining polynomial $f$. Let $X_{\Delta}$ be the complex toric variety as defined before. We denote by $V$ the closure of the hypersurface $\mathring{V}$ in $X_{\Delta}$.

\vspace{0.2cm}

%\begin{remark}
\noindent Let $\Delta$ be a compact convex lattice polyhedron such that the singularity of its corresponding toric variety $X_{\Delta}$ are on the vertices of $\Delta$. Let $(\mathbb C^*)^{\# (\Delta\cap \mathbb Z^n)}$ be the set of all polynomial $f(z)=\sum_{\alpha\in \Delta\cap\mathbb Z^n}a_{\alpha}z^{\alpha}$ such that $a_{\alpha}\ne 0$. Then for a generic choice of a polynomial,  the closure $V$ in $X_{\Delta}$ of the zero set of $f$ is a smooth hypersurface transverse to all toric subvarieties $X_{\Delta'}$, corresponding to the faces $\Delta'\subset \Delta$. In particular, all such hypersurfaces $V$ are diffeomorphic, even symplectomorphic if they are equipped with the symplectic form coming from the one of $X_{\Delta}$.
%\end{remark}

%
%
%
%
%
%
%

\section{Examples of coamoebas and phase tropical hypersurfaces}

\begin{itemize}
\item[(a)] Let $\mathring{V}$ be the line in $(\mathbb{C}^*)^2$ defined by the polynomial 
$f(z,w)=r_1e^{i\alpha_1}z+r_2e^{i\alpha_2}w+r_3e^{i\alpha_3}$ where $r_i$ are real positive numbers and $\alpha_1>\alpha_3>\alpha_2>0$. Then its coamoeba is as displayed in Figure 1. The equations of the  external hyperplanes are given by $(1)$ \, $y = x + \alpha_1 - \alpha_2 + (2k+1)\pi$, \, $(2)$\, $x = \alpha_3 - \alpha_1 + (2l+1)\pi$, and $(3)$\, $y =  \alpha_3 - \alpha_2 + (2m+1)\pi$ with $k,\, l$ and $m$ in $\mathbb{Z}$ (the external hyperplanes are seen in $\mathbb{R}^2$ the universal covering of the torus).

\begin{figure}[h!]
\begin{center}
\includegraphics[angle=0,width=0.5\textwidth]{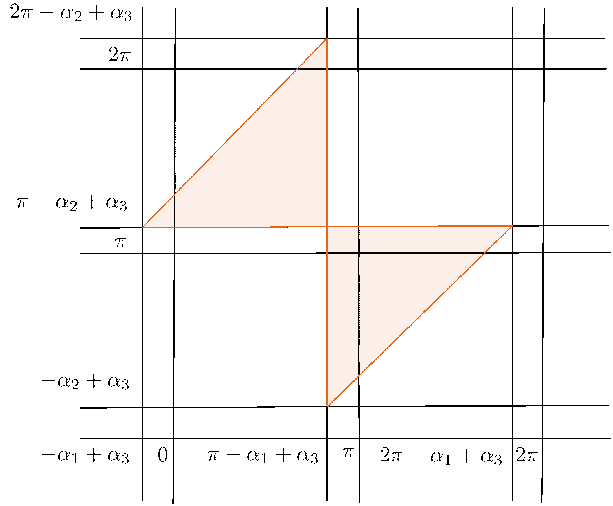}\quad
\caption{The coamoeba of the line in $(\mathbb{C}^*)^2$ defined by the polynomial $f(z,w)=r_1e^{i\alpha_1}z+r_2e^{i\alpha_2}w+r_3e^{i\alpha_3}$ where $r_i$ are real positive numbers and $\alpha_1>\alpha_3>\alpha_2>0$.}
\label{}
\end{center}
\end{figure}

We can remark that in this case there are no extra-pieces, and all the boundary of the closure of this coamoeba in the torus is contained in three external hyperplanes.
\item[(b)] Consider   now the example of a parabola.
Let $\mathring{V}_f\subset (\mathbb{C}^*)^2$ the curve defined by the polynomial $f(z,w) = w-z^2+2z-\lambda$ with $\lambda >1$. Consider the parametrization defined by :
$$
\left\lbrace
\begin{array}{l}
z(r, \alpha ) = r{e}^{i\alpha},\\
w(r, \alpha ) = r^2{e}^{2i\alpha}-2r{e}^{i\alpha}+\lambda ,
\end{array}
\right.
$$
with $r>0$ and $\alpha\in [0,2\pi ]$. We have to compute the argument
of $r^2{e}^{2i\alpha}-2r{e}^{i\alpha}+\lambda$, with $r\in
\mathbb{R}_+^*$. Let $a=\lambda -1$, so we have $w(r,\alpha ) =
(r{e}^{i\alpha} - 1)^2 + a$ and then $\beta = \arg (w(r,\alpha )) =
\arg \left[   \dfrac{(r{e}^{i\alpha} - 1)- i\sqrt{a}}{(r{e}^{-i\alpha}
    - 1)- i\sqrt{a}}\right]$.
\begin{itemize}
\item[(i)] Let $0\leq\alpha\leq \arctan\sqrt{a}$ then
  $0\leq\beta\leq 2\alpha$ if $1+\tan^2\alpha\leq r^2<\infty$ and
  $g_\alpha (r)\leq \beta\leq 2\pi$ if $0<r^2<1+\tan^ 2\alpha$ where
  for each $\alpha$,\,  $g_\alpha$
  is a differentiable function with one maximum in the interval
  $0<r^2<1+\tan^2\alpha$ (see  Figure 2);
\item[(ii)] If $\pi\geq\alpha\geq\arctan\sqrt{a}$ then
  $2\alpha\leq\beta\leq 2\pi$;
\item[(iii)] For $\alpha > \pi$ we have the conjugate of the sets
  in (i) and (ii).
\end{itemize}

\begin{figure}[h!]
\begin{center}
\includegraphics[angle=0,width=0.4\textwidth]{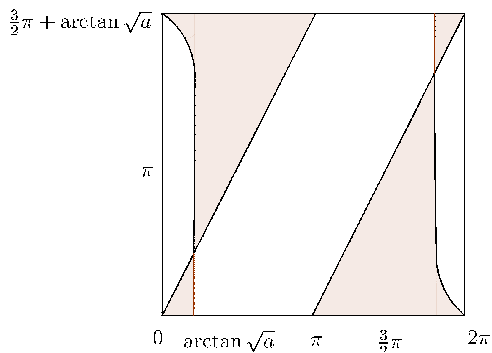}\quad
\caption{Coamoeba of a parabola.}
\label{}
\end{center}
\end{figure}

We can view a parabola as   an algebraic curve 
$\mathring{V}_{f_{\mathbb{K}}}$ over the field of the Puiseux series with
real powers $\mathbb{K}$, defined by the polynomial
$f_{\mathbb{K}}(z,w)=f_t(z,w) =t^0w-t^0z^2+2t^0z-t^{-\Log \lambda}$ with $z,\, w\in \mathbb{K}^*$
and $t\in \mathbb{R}_+^*$.
 It is clear that the limit of the coamoebas of the
curves $\mathring{V}_{f_t}$ converge to the coamoeba of the phase tropical
curve with tropical coefficients
$a_{01}=1,\, a_{00}=-\lambda$ and
$a_{20}=-1$, which are the coefficients with index in $\Verte (\tau )$
where $\tau$ is the triangulation of the Newton polygon of $f$ dual to
$\Gamma$, with $\Gamma$ the tropical curve that is the spine of the amoeba of
$\mathring{V}_f$ (see Figure 3, the coamoeba of a phase tropical parabola).

\begin{figure}[h!]
\begin{center}
\includegraphics[angle=0,width=0.3\textwidth]{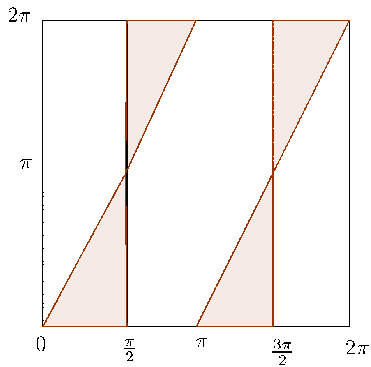}\quad
\caption{Coamoeba of a  parabola with coefficients only in the vertices of the Newton polygon of its defining polynomial.}
\label{}
\end{center}
\end{figure}
We can see in Figure 2 extra-pieces in the coamoeba of our parabola.

\item[(c)]
Let $V_\lambda$ be the complex curve defined by the polynomial
$f(z,w) = \lambda + z + w + zw$ with $\lambda \in \mathbb{R}^*$ with Newton
polygon the standard square of vertices $(0,0), (1,0), (0,1)$ and
$(1,1)$.
\end{itemize}
\begin{itemize}
\item[$1^{st}$ {\it case}.]
 Assume  $0<\lambda < 1$, and we  parametrize
$z=re^{i\alpha}$ with $\alpha \in [0, 2\pi ]$ and $r\in
\mathbb{R}^*_+$. So $\arg (w(r,\alpha )) = \theta (r,\alpha )$ with:
$$
\theta (r,\alpha ) = \arcsin \left( \frac{-r (1-\lambda )\sin
  \alpha}{((\lambda + r(1+\lambda )\cos \alpha +r^2)^2+ r^2(1-\lambda
  )^2\sin^2\alpha )^{\frac{1}{2}}}\right)
$$
and we have $\dfrac{\partial \theta}{\partial r}(r, \alpha ) = 0$ if
and only if $r=\pm \sqrt{\lambda}$, so
$r=\sqrt{\lambda}$ and the maximum  of the argument of $w$ is
attained at $r=\sqrt{\lambda}$, this means that we have
$$
\theta_{\max}(\alpha )=  \arcsin \left( \frac{-\sqrt{\lambda} (1-\lambda )\sin
  \alpha}{((2\lambda +\sqrt{\lambda} (1+\lambda )\cos \alpha
  )^2+ \lambda (1-\lambda
  )^2\sin^2\alpha )^{\frac{1}{2}}}\right)
$$
If $0<\lambda < 1$ it can be viewed as a parameter, and hence as an
element of $\mathbb{K}^*$, which means that the curve $V_\lambda$ is viewed as
an algebraic curve over $\mathbb{K}$, i.e. $V_{\lambda}^{\mathbb{K}} =\{ (z,w)\in (\mathbb{K}^*)^2
\,|\,\lambda + z + w + zw =0 \}$ and $\Log_{\mathbb{K}}(V_{\lambda}^{\mathbb{K}})$ is 
the tropical curve with tropical polynomial
$f_{trop}(x,y) = \max \{ x,y,x+y, -1 \}$. We have $\Log^{-1}(v_1)\cap
W(V_{\lambda}^{\mathbb{K}})$ is the union of the two sets of $S^1\times S^1$ with
boundary the two half of the cycles $\delta_1 =\{ \alpha =\pi \}$ and
$\delta_2 =\{ \beta =\pi \}$ and the half of the cycle defined by the
graph of the function $\theta_{\max}$, which is homotopic to the
product of $\delta_1$ and $\delta_2$. We have the same
result for the vertex $v_2$.

\begin{figure}[h!]
\begin{center}
\includegraphics[angle=0,width=0.6\textwidth]{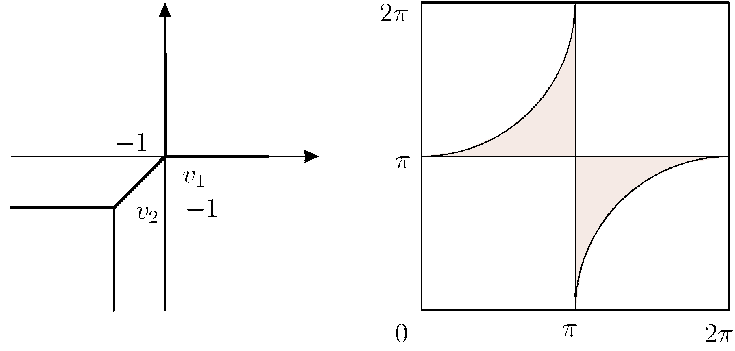}\quad
\caption{The spine of the amoeba of the hyperbola defined by the polynomial $f_{\lambda}$ with $0<\lambda<1$ and its coamoeba.}
\label{}
\end{center}
\end{figure}

\vspace{0.3cm}

\item[$2^{nd}$ {\it case}.]
 Suppose $\lambda > 1$,  and  let $\tau
=\frac{1}{\lambda}$. So, $\lambda = \tau^{-1}$ and then $V_{\tau}^{\mathbb{K}} =\{ (z,w)\in ({\mathbb{K}}^*)^2
\,|\, \tau^{-1} + z + w + zw =0 \}$. Hence $\Log_{\mathbb{K}}(V_{\tau}^{\mathbb{K}})$ is 
the tropical curve  with tropical polynomial
$f_{trop}(x,y) = \max \{ x,y,x+y, +1 \}$. Hence,  we have:
\begin{align*}
\Log^{-1}(v_1)\cap
&W(V_{\tau}^{\mathbb{K}})=\{ (\alpha ,\beta) \in S^1\times S^1 / 0\leq \alpha\leq
\pi ,\,\, \theta_{\max}(\alpha )\leq\beta\leq \pi \}\\
&\phantom{W(V_{\tau}^{\mathbb{K}})=}\cup \{ (\alpha ,\beta) \in S^1\times S^1 / \pi\leq \alpha\leq
2\pi ,\,\, \pi\leq\beta\leq   \theta_{\max}(\alpha )\}
\end{align*}

\vspace{0.3cm}

\item[$3^{rd}$ {\it case}.] Assume $\lambda =1=t^0$, so we have $f_1(z,w) =
(1+z)(1+w)$, and the corresponding tropical curve is the union of 
two axes, and  $\Log^{-1}(v_1)\cap
W(V_1^{\mathbb{K}})$ is the union of  two circles (the valuation of the
constant coefficient is zero in this case).

\vspace{0.3cm}

\item[$4^{th}$ {\it case}.] Suppose $\lambda < 0$ and $\lambda \ne -1$. If
$\mid \lambda \mid <1$, then consider  $\lambda$ as a parameter and we
have the tropical curve of the first case (it means that the
valuation of the constant coefficient is negative). So, if we put $z(t)
=t^{-x}e^{i\alpha}$ then $w(t)_{\alpha} = -\left(
\dfrac{t-t^{-x}e^{i\alpha}}{-1-t^{-x}e^{i\alpha}} \right)$ and then $\Log^{-1}(v_1)\cap
W(V_{\lambda}^{\mathbb{K}})$ is the closure in $S^1\times S^1$ of the set
$$\left(\alpha , \lim_{t\rightarrow 0,\, x\rightarrow 0} \arg (w(t)_{\alpha}\right)$$
with $0\leq \alpha \leq 2\pi$. We then obtain  the union  of two
triangles. For the second vertex  we have $\Log^{-1}(v_2)\cap
W(V_{\lambda}^{\mathbb{K}})$ is the closure in $S^1\times S^1$ of the set
$$\left(\alpha , \lim_{t\rightarrow 0, x\rightarrow 1} \arg (w(t)_{\alpha}\right)$$
with $0\leq \alpha \leq 2\pi$, and we obtain the union of  two
triangles.

\vspace{0.3cm}

\item[$5^{th}$ {\it case}.] Suppose $\lambda < -1$ and write $\lambda
= \tau^{-1}$ with $-1<\tau <0$. So we have the tropical curve of the
second case (this means that the valuation of the constant
coefficient is positive).

\end{itemize}

\section{A differential  structure on phase tropical hypersurfaces}

\vspace{0.2cm}
\subsection{A differential  structure on phase tropical hyperplanes} ${}$

\vspace{0.2cm}

In \cite{M2-04}, Mikhalkin gives the following definition of a generalized pair-of-pants:
\begin{definition}\label{definition-pants}
Let $\mathscr{H} \subset \mathbb{CP}^n$ be an arrangement of $n+2$ generic hyperplanes in $\mathbb{CP}^n$. Let $\mathscr{U}\subset \mathbb{CP}^n$ be the union of their tubular $\varepsilon$-neighborhood for a small  $0<\varepsilon\ll1$. The complement $\overline{\mathscr{P}}_n=\mathbb{ CP}^n\backslash   \mathscr{U}$ is called the $n$-dimensional pair-of-pants, and $\mathscr{P}_n=\mathbb{CP}^n\backslash \mathscr{H}$ is called the $n$-dimensional open pair-of-pants. 
\end{definition}

As $\mathscr{H}\subset \mathbb{CP}^n$ is unique up to the action of the projective special linear group $PSL_{n+1}(\mathbb C)$, then $\mathscr{P}_n$ can be given a canonical complex structure. The one dimensional pair-of pants 
$\mathscr{P}_1$ is diffeomorphic to the Riemann sphere punctured at 3 points.
  Moreover,  Mikhalkin constructs a foliation $\mathscr F$ of the complement in $\mathbb{R}^n$ of the complex defined by the standard tropical hyperplane $\Gamma_n$. As before, if $v\in \Gamma$ is a vertex, then there exists a neighborhood ${U}_v$ of $v$ in $\Gamma$ and an affine linear transformation $F$ with linear part $A_v$ in $SL_n(\mathbb Z)$ such that up to a translation in $\mathbb R^n$, ${}^tA^{-1}_v({U}_v)$ is a neighborhood of the origin in $\Gamma_n$. Let $W_v$ be a neighborhood of $\overline{F({U}_v)}$. According to Mikhalkin, a partition of unity gives   a foliation $\mathscr{F}_{\Gamma}$ of a neighborhood $W$ of $\Gamma$.

\noindent Let $\pi_{\mathscr{F}_{\Gamma}}:W(\Gamma)\to \Gamma$ the projection along $\mathscr{F}_{\Gamma}$. By Theorem 5.4 of Mikhalkin and Rullgard, $\Log_t(V_t)\subset W(\Gamma)$ for $t\gg 0$. Let $$\lambda_t:=\pi_{\mathscr{F}_{\Gamma}}\circ \Log_t :V_t\to \Gamma .$$

\vspace{0.2cm}

The example of hyperplanes in the projective space is fundamental for our Theorem \ref{A}. So, let $H=\{ (z_1,\ldots,z_n)\in \mathbb{C}^n\,| \, z_1+\cdots + z_n +1=0\}\subset \mathbb{CP}^n$ be a hyperplane.  Consider its toric part $\mathring{H}= H\cap (\mathbb{C}^*)^n$. Let us denote by $\mathscr{A}_n\subset\mathbb{R}^n$ the amoeba of $\mathring{H}$ and by 
$\Gamma_n\subset \mathbb{R}^n$ the tropical hyperplane defined by the tropical polynomial:
$$
f_{trop}(x_1,\ldots,x_n) = \max \{ 0,x_1,\ldots, x_n\}.
$$
It is well known that $\Gamma_n\subset \mathscr{A}_n$  and it is called the spine of the amoeba $\mathscr{A}_n$. Moreover, $\Gamma_n$ is a strong deformation retract of $\mathscr{A}_n$ (see \cite{PR1-04}).  The number of  connected components of the complement of the amoeba $\mathscr{A}_n$ in $\mathbb{R}^n$ is equal to $n+1$. Each component  $\mathscr{C}_i$ of $\mathbb{R}^n\setminus \mathscr{A}_n$ is equal to the subset of $\mathbb{R}^n$  where one the functions $\{ 0,x_1,\ldots, x_n\}$ is maximal.

%\vspace{0.2cm}

\noindent Let us recall   Mikhalkin's construction of the foliation mentioned above (\cite{M2-04}, Section 4.3)  to  obtain a  singular foliation of the amoeba $\mathscr{A}_n$. More precisely, let $\mathscr{L}_i$ be the foliation of the complement component of $\Gamma$ corresponding to $x_i$ (i.e., the set of $\mathbb{R}^n$ where the tropical polynomial $f_{trop}$ achieved its maximum) into straight lines parallel to the gradient $v_i:=\frac{\partial}{\partial x_i}$ of $x_i$ for $i=1,\ldots, n$ and in the component corresponding to the constant function equal to $0$ we consider the foliation into straight lines parallel to the vector with coordinates $v_0=(1,\ldots,1)$. Consider $\pi_i: \mathscr{C}_i\rightarrow\Gamma_n$ the linear projection onto $\Gamma_n$ and parallel to the vector $v_i$. Let $\pi$ the following map:
$$
\pi: \mathscr{A}_n\setminus\Gamma_n \rightarrow \Gamma_n,
$$
where $\pi_{|\mathscr{C}_i\cap\mathscr{A}_n} = {\pi_i}_{|\mathscr{A}_n}$ for each $i=0, 1,\ldots n$. The  foliations of the $\mathscr{C}_i$'s glue to a global foliation $\mathscr{L}$ of $\mathscr{A}_n$ which has singularities at $\Gamma_n$ and the leaves passing through a point $p$ in an open $(n-1-k)$-cell of $\Gamma_n$ is diffeomorphic to the union of $k+2$ segments having a common boundary point $p$ (in other word a cone over $k+2$ points). We can smooth the foliation $\mathscr{L}$ over all open $(n-1)$-cells of $\Gamma_n$, but not at the lower dimensional cells because their leaves are not even a topological manifolds. The only leaves diffeomorphic to a manifold are those passing through open $(n-1)$-cells which are diffeomorphic to the  closed interval $[-1,+1]$. Let us denote the foliation obtained by this smoothing by $\mathscr{F}$.

\begin{proposition}
A phase tropical hyperplane ${H}_\infty\subset (\mathbb{C}^*)^n$ is diffeomorphic to a hyperplane in the projective space $\mathbb{CP}^n$ minus $n+1$  generic hyperplanes.
\end{proposition}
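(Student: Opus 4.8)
The plan is to realise both manifolds as fibrations over the standard tropical hyperplane $\Gamma_n$ and to build a diffeomorphism between them respecting these fibrations. First I would identify the target: a hyperplane in $\mathbb{CP}^n$ is a copy of $\mathbb{CP}^{n-1}$, and removing $n+1$ generic hyperplanes from it is, up to the action of $PGL_{n+1}(\mathbb{C})$, the same as removing the $n$ coordinate hyperplanes together with the hyperplane at infinity; hence the target is the $(n-1)$-dimensional open pair-of-pants $\mathscr P_{n-1}$, which is precisely the toric part $\mathring H=H\cap(\mathbb{C}^*)^n$ of a projective hyperplane. So it suffices to construct a diffeomorphism $\mathring H\to H_\infty$.

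Next I would put the common base $\Gamma_n$ under both sides. On $\mathring H$, use Mikhalkin's foliation $\mathscr F$ of the amoeba $\mathscr A_n$ and its projection $\pi:\mathscr A_n\to\Gamma_n$ recalled above, and set $\lambda=\pi\circ\Log:\mathring H\to\Gamma_n$; over the relative interior of each cell of $\Gamma_n$ this is a locally trivial bundle with fibre a torus times the cell, the smoothing of $\mathscr F$ along the codimension-one cells making $\lambda$ smooth there. On $H_\infty$, the map is just $\Log$, whose image is $\Gamma_n$ because the amoeba has collapsed onto its spine in the limit; since the set of dominant monomials among $1,z_1,\dots,z_n$ is constant on the relative interior of each cell, $H_\infty$ is conical over the fan $\Gamma_n$ with the analogous cell-by-cell product structure, the slice over the interior of the $(n-1)$-cell dual to an edge $\{\alpha,\beta\}\subset\Delta$ being a half-subtorus cut out by the binomial phase condition $\arg z^\alpha=\arg(-z^\beta)$. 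The decisive feature is that $\Gamma_n$ has a single vertex $v=0$, and this vertex is dual to the whole Newton simplex $\Delta$; hence the tropical localization Proposition proved above applies with $\Delta_v=\Delta$, so that over a small ball $U_v$ about the vertex $H_\infty$ is controlled by the full polynomial $z_1+\cdots+z_n+1$ and not by a proper truncation of it. Consequently its central piece is diffeomorphic (via the ambient diffeomorphisms $H_t$) to the central piece of $\mathring H$, namely a smoothed closed pair-of-pants $\overline{\mathscr P}_{n-1}$, which together with its $n+1$ cylindrical ends is $\mathscr P_{n-1}\cong\mathring H$.

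I would then assemble the diffeomorphism cell by cell over $\Gamma_n$. Away from $\Log^{-1}(U_v)$, the conical structure of both sides over the fan $\Gamma_n$ exhibits them as cylindrical ends, each deformation retracting onto the torus along which it is attached to the central piece; the attaching data agree because on both sides they are the binomial phase conditions indexed by the edges of $\Delta$. Combining the fibrewise diffeomorphism over $U_v$ with the evident ones over the cylindrical ends, and interpolating with a collar and a partition of unity on $\Gamma_n$ subordinate to the star neighbourhoods of its cells, produces the desired $\mathring H\to H_\infty$. The hard part is the step over the vertex and the lower-dimensional cells of $\Gamma_n$: one must certify that $H_\infty$ is a genuine smooth submanifold of $(\mathbb{C}^*)^n$ there — where the naive $\Log$-fibres are cones over several tori and hence not manifolds — and that its central piece is diffeomorphic, as a manifold with corners, to $\overline{\mathscr P}_{n-1}$. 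This is exactly what transporting Mikhalkin's smoothed foliation $\mathscr F$ to $H_\infty$ accomplishes (equivalently, applying tropical localization with the full truncation $\Delta_v=\Delta$); once it is in place, the remaining gluing is combinatorially forced by the fan structure of $\Gamma_n$ and needs only routine collar and partition-of-unity arguments.
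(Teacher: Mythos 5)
Your reduction of the target to $\mathscr P_{n-1}=\mathring H=H\cap(\mathbb{C}^*)^n$ and your overall strategy (fibre both sides over $\Gamma_n$ and compare cell by cell) are sound, and the treatment of the cylindrical ends via the foliation $\mathscr F$ and the binomial truncations along the rays is in the right spirit. But the step you yourself flag as ``the hard part'' --- certifying that $H_\infty$ is a genuine smooth submanifold over the vertex and lower-dimensional cells of $\Gamma_n$, and that its central piece is diffeomorphic to $\overline{\mathscr P}_{n-1}$ --- is where the argument breaks, and your proposed fix does not repair it.

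The problem is that invoking tropical localization with $\Delta_v=\Delta$ is vacuous. That proposition compares the family near the vertex with the \emph{truncation} $f^{\Delta_v}$; when $\Delta_v$ is the whole Newton simplex the truncation is $f$ itself, so the statement becomes ``$\mathring V_t$ near $v$ is $\varepsilon$-close to $\mathring V_t$ near $v$'' --- true, but carrying no content. More importantly, localization only yields Hausdorff proximity, never a diffeomorphism; even a non-vacuous application would not let you conclude that the central piece of the Hausdorff limit $H_\infty$ is smooth, let alone diffeomorphic to $\overline{\mathscr P}_{n-1}$. ``Transporting Mikhalkin's foliation to $H_\infty$'' is offered as the alternative, but $\mathscr F$ lives on the amoeba $\mathscr A_n$, which has non-empty interior, whereas $\Log(H_\infty)=\Gamma_n$ is its spine; how the foliation is supposed to be transported is left unexplained, and it is precisely the content one would need to supply. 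The paper does not follow this route: it computes the $\Log$-fibres of $H_\infty$ directly (Lemma~\ref{description-of-H}: over the interior of an $l$-cell the fibre is $(S^1)^l\times co\mathscr{A}(n-1-l)$), so the central piece is exhibited concretely as the closure of the coamoeba of a hyperplane, and the identification with Mikhalkin's $\overline{\mathscr P}_{n-1}$ is made by matching this against the stratified descriptions in \cite{NS-13} and \cite{M2-04}. You would need something of that explicit flavour --- a direct parametrization of $W(\mathring H_{\mathbb K})$ over the vertex, or the coamoeba description --- rather than a second appeal to localization, to close the gap.
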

\begin{proof}
 Since each phase tropical hyperplane is a translated  in  $(\mathbb{C}^*)^n$ of the following phase tropical hyperplane ${H}_\infty = W(\{(z_1,\ldots ,z_n)\in (\mathbb{K}^*)^n | \, z_1+\cdots+z_n+1=0\})$, then it suffices to consider this case.
Let us start by the case of phase plane  tropical line in $(\mathbb{C}^*)^2$.  In the case of lines the inverse image by the logarithmic map of the vertex of the tropical line $\Gamma := \Log (\mathscr{H})$ is a union of two triangles whose vertices pairwise identified, and the inverse image by the logarithmic map of any point in the interior of its rays is a circle  (see Example (a)). This means that the inverse image of each ray is  a holomorphic annulus $\mathscr{R}_j$ for $j=1,2,3$. It is clear now that a phase tropical line in $(\mathbb{C}^*)^2$ is diffeomorphic to a sphere punctured in three points. In fact, if we denote $v_0$ the vertex of  $\Gamma$  and $\mathcal{R}_j$ for $j=1, 2, 3$ are the three rays going to the infinity, then the phase   tropical line in $(\mathbb{C}^*)^2$ is diffeomorphic the the gluing of 
the closure $\overline{\Log^{-1}(v_0)}$ in the real torus $(S^1)^2$ and the three semi-open holomorphic annulus $\mathscr{R}_j= \Log^{-1}(\mathcal{R}_j)$ for $j=1, 2, 3$. A complete description of $\overline{\Log^{-1}(v_0)}$ is given in \cite{NS-13}.  For any dimension,  it is  the same as  the complement in the real torus $(S^1)^n$ of an open zonotope (i.e. the coamoeba of a hyperplane).  In case where $n>2$, using  the description of the coamoeba of a hyperplane given in Theorem 3.3 \cite{NS-13} and the description of the $(n-1)$-dimensional pair-of-pants given in Proposition 2.24 \cite{M2-04}, one can check the  phase tropical hyperplane $(\mathbb{C}^*)^n$ is diffeomorphic to the complex projective space $\mathbb{CP}^{n-1}$ minus a tubular neirghborhood of the union $\mathscr{H}$ of $n+1$ hyperplanes in $\mathbb{CP}^{n-1}$. Let us be more explicite.

\vspace{0.1cm}

The hyperplane $H_\mathbb{K} := \{ (z_1,\ldots ,z_n)\in (\mathbb{K}^*)^n | \, z_1+\cdots+z_n+1=0  \}$ can be parametrized as follows:

\begin{displaymath}
 \left\{ \begin{array}{lll}
z_1(t) &=& t^{-x_1}e^{i\alpha_1}\\
z_2(t) &=& t^{-x_2}e^{i\alpha_2}\\
\vdots&\vdots&\vdots\\
z_{n-1}(t) &=& t^{-x_{n-1}}e^{i\alpha_{n-1}}\\
z_n(t)&=& 1-\sum_{j=1}^{j=n-1}t^{-x_j}e^{i\alpha_j}
\end{array} \right.
\end{displaymath}
with $x_j\in \mathbb{R}$ and $0\leq \alpha_j\leq 2\pi$ for $j=1,\ldots , n-1$. If we denote $\mathring{H}_t\subset (\mathbb{C}^*)^n$ the hyperplane  given by the parametrization for a fixed $t$. Then  all the family of hyperplanes $\{ \mathring{H}_t\}_{0<t\leq 1}$ is viewd as a single hyperplane in $(\mathbb{K}^*)^n$ and we  have $\mathring{H}_\infty = W(\mathring{H}_\mathbb{K})$  where $W$ is the map from $(\mathbb{K}^*)^n$ to $(\mathbb{C}^*)^n$ defined in Section 3. Also, the tropical hyperplane $\Gamma_n$ is the image by the logarithmic map of $\mathring{H}_\infty$. The following lemma gives a complete topological description of $\mathring{H}_\infty$.
\end{proof}

\begin{lemma}\label{description-of-H}
Let $\mathring{H}_\infty \subset (\mathbb{C}^*)^n$ be a phase tropical hyperplane and $\Gamma_n$ its image by the logarithmic map.
Then the inverse image of a point in the interior of an $l$-cell $\sigma\subset \Gamma_n$ is the product of a real $l$-torus with the coamoeba of a hyperplane in $(\mathbb{C}^*)^{n-l}$, i.e. if $x = (x_1,\ldots,x_n)\in \mathring{\sigma}$ then we have:
$$
\Log^{-1}(x) = (S^1)^l\times co\mathscr{A}(n-1-l),
$$
where $co\mathscr{A}(n-1-l)$ is the coamoeba a $(n-1-l)$-plane in $(\mathbb{C}^*)^{n-l}$.
\end{lemma}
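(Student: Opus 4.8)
I would prove Lemma \ref{description-of-H} by combining the explicit parametrization of $\mathring H_\infty=W(\mathring H_\mathbb K)$ with the tropical localization result (the Proposition proved in Subsection 3.2) and Mikhalkin's local model for the standard tropical hyperplane. The starting observation is that $\Gamma_n\subset\mathbb R^n$ is a cone: its $l$-cells through the vertex $v_0=0$ correspond exactly to the $l$-element subsets $I\subset\{0,1,\dots,n\}$, and the open $l$-cell $\mathring\sigma_I$ is the relative interior of the cone spanned by $\{-e_i : i\in I\setminus\{0\}\}$ (with the convention $e_0:=\sum_{j=1}^n e_j$, so a cell containing the direction $e_0$ is spanned together with $\sum_{j\notin I} e_j$). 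So it suffices to compute $\Log^{-1}(x)$ for $x$ in the relative interior of one such cell.

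\emph{Step 1: reduce to a point in a standard cell by a monomial change of coordinates.} Given $x\in\mathring\sigma_I$, I would apply an element $A\in SL_n(\mathbb Z)$ (the linear part of the transformation $F$ appearing just before the Proposition) that takes $\sigma_I$ to the coordinate cell $\sigma_{\{1,\dots,l\}}=\{x_1=\dots=x_l\le 0,\ x_{l+1}=\dots=x_n=0\}$ pointing in the direction $(-1,\dots,-1,0,\dots,0)$. The induced monomial map on $(\mathbb C^*)^n$ carries $\mathring H_\infty$ to another phase tropical hyperplane, and since $\Log^{-1}$ of a point is an $(S^1)^n$-orbit intersected with the variety, this reduction is legitimate provided one checks that the statement of the lemma is invariant under such monomial maps — which it is, because they act as affine isomorphisms on the torus factor and the coamoeba of a hyperplane is carried to the coamoeba of a hyperplane.

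\emph{Step 2: identify the fiber over a point in the standard $l$-cell.} For $x$ in the relative interior of $\sigma_{\{1,\dots,l\}}$, the tropical polynomial $\max\{0,x_1,\dots,x_n\}$ attains its max simultaneously at the monomials $z_1,\dots,z_l$ — i.e. $x_1=\dots=x_l<0$ and $x_{l+1}=\dots=x_n=0$ is wrong; rather one is on the cell where $0=x_{l+1}=\dots=x_n$ are the maximizers if $x_1,\dots,x_l<0$, or on the cell where $x_1=\dots=x_l>0$ are the maximizers. In either case the localization Proposition says $\Log^{-1}(x)\cap \mathring H_\infty$ is governed by the truncation of the defining polynomial to the corresponding face $\Delta_v$ of the simplex: in the parametrization, the coordinates $z_i$ with $i\notin I$ (for which $t^{-x_i}$ is bounded and bounded away from $0$) are free $S^1$-valued phases contributing the $(S^1)^l$ factor, while the remaining $n-l$ coordinates together with the $z_n$-equation $z_n=1-\sum t^{-x_j}e^{i\alpha_j}$ become, in the limit $t\to\infty$, exactly the defining relation of a phase tropical hyperplane in $(\mathbb C^*)^{n-l}$. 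Its $\Log=0$ fiber is by definition (Example (a) and the $n>2$ discussion in the Proposition above) the coamoeba $co\mathscr A(n-1-l)$ of a hyperplane in $(\mathbb C^*)^{n-l}$. Assembling these, $\Log^{-1}(x)=(S^1)^l\times co\mathscr A(n-1-l)$.

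\emph{Main obstacle.} The delicate point is Step 2: making precise, from the parametrization, which phases survive to the limit and that the limiting fiber is \emph{exactly} the coamoeba of a hyperplane in the lower torus rather than merely contained in it. This is where one must invoke the last sentence of the proof of the localization Proposition — the inclusion $\Arg(\mathring V_{\infty,f}\cap\Log^{-1}(v))\subseteq\Arg(\mathring V_{\infty,\Delta_v})$ together with the reverse containment obtained by letting $t\to\infty$ along the family $H_t(\mathring V_{f^{\Delta_v}})$ — and combine it with the explicit description of coamoebas of hyperplanes from Theorem 3.3 of \cite{NS-13}. One also has to handle the boundary behavior carefully when $x$ approaches a lower-dimensional face of $\sigma_I$, since then further coordinates become unbounded and the fiber jumps; but for $x$ in the \emph{relative interior} the argument is clean, and the gluing of these descriptions across cells is exactly the content already recorded in the discussion of the foliation $\mathscr F$ preceding the Proposition.
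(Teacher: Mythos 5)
Your overall plan — parametrize $\mathring H_\infty = W(\mathring H_{\mathbb K})$ via the displayed formula, reduce to a standard cell by a monomial automorphism of $(\mathbb C^*)^n$ covering a map in $SL_n(\mathbb Z)$, and then take the degeneration limit — is essentially the paper's own argument, with your Step~1 being a clean reduction that the paper performs only implicitly (the paper simply describes an arbitrary $l$-cell as having $l$ strictly negative coordinates and the rest zero). The appeal to the localization Proposition of Section~3.2 is a reasonable packaging, but the paper proves the lemma directly from the parametrization without invoking it.

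There is, however, a genuine error in your Step~2: you have the two groups of coordinates swapped, and the bookkeeping contradicts itself. You write that ``the coordinates $z_i$ with $i\notin I$ (for which $t^{-x_i}$ is bounded and bounded away from $0$) are free $S^1$-valued phases contributing the $(S^1)^l$ factor, while the remaining $n-l$ coordinates \dots become \dots a phase tropical hyperplane in $(\mathbb C^*)^{n-l}$.'' Since $|I|=l$, the set $\{i : i\notin I\}$ has $n-l$ elements and the ``remaining'' set has $l$, so neither count matches the label you attach to it. More to the point, the assignment is inverted. The $n-l$ coordinates with $x_i=0$ (precisely those with $t^{-x_i}$ bounded and bounded away from $0$) are the ones whose leading Puiseux coefficients $\xi_i$ survive in the lowest-order relation $\sum_{i\notin I}\xi_i + 1 = 0$; consequently their arguments are constrained and this relation is what produces $co\mathscr A(n-1-l)$, the coamoeba of a hyperplane in $(\mathbb C^*)^{n-l}$. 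Conversely, the $l$ coordinates with $x_i<0$ have $t^{-x_i}\to 0$ as $t\to 0$, drop out of the lowest-order equation, and their phases $\alpha_i$ are therefore unconstrained — these contribute the torus factor $(S^1)^l$. This is the assignment used in the paper's proof (``for any fixed $\alpha_{j_1},\ldots,\alpha_{j_l}$ we obtain the coamoeba of a hyperplane in $(\mathbb C^*)^{n-l}$''). As written, your Step~2 would yield $(S^1)^{n-l}\times co\mathscr A(l-1)$, which is not the claimed fiber. The fix is simply to interchange $I$ and its complement throughout Step~2; the surrounding scaffolding (the $SL_n(\mathbb Z)$ reduction, the two-sided $\Arg$-inclusion from the localization Proposition, and the citation of the coamoeba description in \cite{NS-13}) is sound.
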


\begin{proof}
Let $x$ be a point in the interior of an $l$-cell, then there exist $x_{j_1}, \ldots , x_{j_l}$ strictly negative and all the other $x_j$ are equal to zero. As $\mathring{H}_\infty$ is the limit when $t$ tends to zero (if we want t goes to infinity then we can make the change of variable in the parametrization,    $t$ by $\frac{1}{\tau}$), then for any fixed $\alpha_{j_1}, \ldots, \alpha_{j_l}$ we obtain the coamoeba of a hyperplane in $(\mathbb{C}^*)^{n-l}$  (recall that $\lim_{t\rightarrow 0}t^{-x_{j_u}} =0$, because $x_{j_u}<0$ for any $u=1,\ldots, l$). 
But $0\leq \alpha_{j_u}\leq 2\pi$, which means that the fiber over $x$
is the product of  the torus $(S^1)^l$ with the coamoeba of a hyperplane in $(\mathbb{C}^*)^{n-l}$. In particular, the inverse image of a $0$-cell is the coamoeba of a hyperplane in $(\mathbb{C}^*)^{n}$ which is equal to its phase limit set, and its topological description is given in \cite{NS-13}.
\end{proof}

Lemma \ref{description-of-H} gives a complete description of the phase tropical hyperplane $\mathring{H}_\infty$, which coincide with the description of a hyperplane in the projective space $\mathbb{CP}^n$ minus $n+1$  generic hyperplanes.

\vspace{0.2cm}

\subsection{A differential structure on phase tropical hypersurfaces} ${}$

\vspace{0.2cm}

In the general case,   let us denote by $\Gamma$ the  tropical variety limit of the family of  amoebas $\{\mathscr{A}_t\}$, where $\mathscr{A}_t$ is the amoeba of the variety $\mathring{V}_t$. Also, 
we assume that  the tropical hypersurface $\Gamma$ is smooth in the sense that every vertex of $\Gamma$ is dual to a simplex of Euclidean volume equal to $\frac{1}{n!}$. Therefore, locally for any vertex $v$ of $\Gamma$ there exists an open neighborhood ${U}_v$ diffeomorphic to the standard tropical hyperplane, in other words, tropical pair-of-pants. More precisely, there exists an affine linear transformation of $\mathbb R^n$ whose linear part ${A}_v$ belongs to $SL_n(\mathbb Z)$ such that ${U}_v$ is the image of the standard tropical hyperplane by ${}^tA_v^{-1}.$ Namely, $\overline{{U}}_v$ has $n+1$ boundary components isomorphic to an $(n-2)$-dimensional tropical hyperplane in $\mathbb R^{n-1}$ where $\mathbb R^{n-1}$ can be viewed as a boundary component of the tropical projective space $\mathbb{PT}^n$ represented by the standard simplex.

Let $v_1$ and $v_2$ be two adjacent vertices of $\Gamma$, in other words, there exists a compact edge $e$ with boundary $v_1$ and $v_2$. Then ${U}_{v_i}$ has a boundary component $\mathscr{B}_{ij}: = \partial_j{U}_{v_i}$ that can be viewed as a component of the boundary of a tubular neighborhood of a boundary component $\mathscr{B}_{ji}:=\partial_i{U}_{v_j}$. In other words, there exists an open neighborhood ${U}_{v_1v_2}$ of $v_1$ and $v_2$ containing ${U}_{v_1}$ and $\mathscr{U}_{v_2}$ such that ${U}_{v_1v_2}$ is the interior of the gluing of $\overline{{U}}_{v_1}$ and $\overline{{U}}_{v_2}$ along their boundaries $\mathscr{B}_{ij}$ and $\mathscr{B}_{ji}$ are joined by a vertical edge and all the other edges adjacent to $v_i ~(i=1,2)$ are horizontal (i.e., they are mutually parallel)  such that the reversing orientation diffeomorphism is given by $(z_1,\cdots,z_{n-1},z_n)\mapsto (z_1,\cdots,z_{n-1},\overline{z}_n)$. After gluing all pieces, we obtain a manifold $W_\infty(\Gamma)$ with boundary coming from unbounded 1-cells of $\Gamma_f$ where each unbounded 1-cell will corresponds to $\mathscr{B}_{ij}$ for some vertex $v_i$. Each $\mathscr{B}_{ij}$ is a circle fibration over a union of lower dimensional pair-of-pants $\mathscr{P}_{n-2}$ (see Figure 5).  We can remark that $W_\infty(\Gamma)$ is a topological description  of the decomposition of $\mathring{H}_\infty= W(\mathring{V}_{\mathbb{K}})$, where $\mathring{V}_{\mathbb{K}}$ is the hypersurface of $(\mathbb{K}^*)^n$ representing the family $\{\mathring{V}_t\}$. In other words, the family $\{\mathring{V}_t\}$ is viewed as a single hypersurface in  the algebraic torus $(\mathbb{K}^*)^n$.

\begin{figure}[h!]
\begin{center}
\includegraphics[angle=0,width=0.6\textwidth]{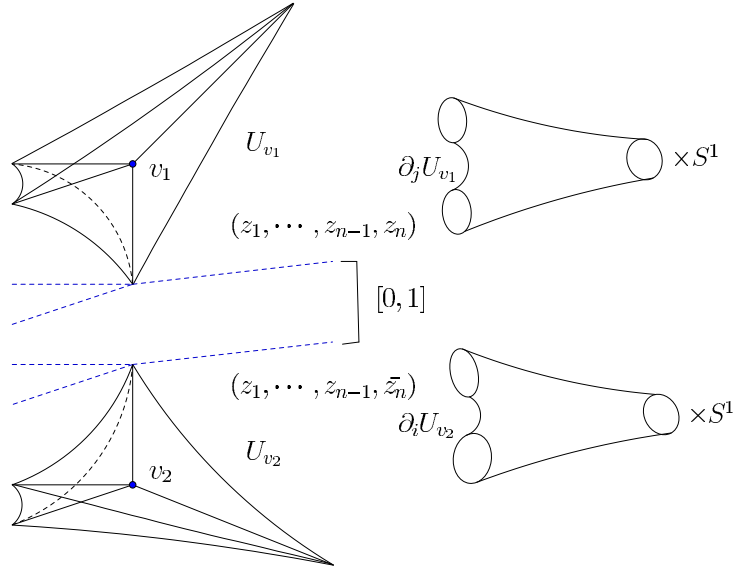}\quad
\caption{Gluing of two $2$-dimensional pairs-of-pants in $(\mathbb{C}^*)^3$ along one component of their boundaries.}
\label{}
\end{center}
\end{figure}

Let us denote by $M_\infty(\Gamma)$ the result of collapsing all fibers of these fibrations on  the boundary $\partial W_\infty(\Gamma)$ of $W_\infty(\Gamma)$. Then $M_\infty(\Gamma)$ is a smooth manifold. Indeed, this construction coincide locally with collapsing the boundary on $\overline{\mathscr{P}}_{n-1}$ which results in the projection space $\mathbb{CP}^{n-1}$ which is smooth.

\subsection{Proof of Theorem \ref{A}} ${}$

\vspace{0.2cm}

\noindent Since all smooth hypersurface with a fixed Newton polytope are isotopic, then we can choose any of them. More precisely, we will use for our subject the convenient one. Let $f_t(x)=\sum_{j\in \Delta\cap\mathbb{Z}^n}a_jt^{-v(j)}z^j$ be a polynomial with a parameter $t$, and $\mathring{V}_t=\{f_t=0\}\subset (\mathbb C^*)^n$. The family of $f_t$ can be viewed as a single polynomial in $\mathbb K[z_1^{\pm 1},\cdots,z_n^{\pm 1}]$. Therefore this family defines a hypersurface $\mathring{V}_{\mathbb K}\subset (\mathbb K^*)^n$. Let $\mathscr{A}_t:=\Log_t(\mathring{V}_t)$ and $\mathscr{A}_{\mathbb K}:=\Log_{\mathbb K}(\mathring{V}_{\mathbb K})$.

\noindent Let $\Gamma$ be a maximally   dual $\Delta$-complex  (i.e. all the element of  its dual the subdivision are simplex of Euclidean volume $\frac{1}{n!}$)  and $\nu:\Delta \cap \mathbb Z^n\to \mathbb R$ be the function such that $\Gamma=\Gamma_\nu$ i.e.,  $\Gamma_\nu$ is the tropical hypersurface defined by the tropical polynomial $\displaystyle{ \max_{\alpha\in \Delta \cap \mathbb Z^n } \{ \nu ({\alpha}) + <\alpha , x>  \}}$. 
Then we obtain a family of polynomial called a Viro-patchworking polynomial \cite{V-90}
$$
f_t(z)=\sum_{v\in \Delta\cap \mathbb Z^n}t^{-v(j)}z^j.
$$ 
Let us denote $\mathring{V}_t\subset (\mathbb C^*)^n$ the zero locus of the polynomial $f_t$. Using a foliation  of the amoeba of $\mathring{V}_t$  Mikhalkin obtains a map $\lambda_t=\pi_{\mathcal F_{\Gamma}}\circ \Log_t: V_t \to \Gamma$,  and proves in  Lemma 6.5, \cite{M2-04}  that $\mathring{V}_t$ is smooth for a sufficiently  large  $t\gg 0$.

\noindent First of all, $\Gamma$ looks locally as a tropical hyperplane after a  linear transformation with linear part  $SL_n(\mathbb Z)$. It means that $\Gamma$ can be locally identified to a tropical hyperplane in $\mathbb R^n$ by a linear transformation $F$ of $\mathbb R^n$ with a linear part in $SL_n(\mathbb Z)$.

\noindent It was shown in Lemma 6.5 \cite{M2-04}  that $V_t$ is also  smooth, and $\lambda_t$ satisfies a nice properties. Indeed, for $t\gg 0$, $\mathring{V}_t$ is smooth, and $\mathring{V}_t$ is an union of finite number of open sets, where each set is the image of a small perturbation of a hyperplane. Hence, its compactification $V_t\subset X_{\Delta}$ is smooth and transverse to the coordinate hyperplanes. Also, for a large $t\gg 0$, $V_t$ is isotopic to the variety $M_\infty(\Gamma)$ constructed above (which is a compactification of the phase tropical variety $\mathring{V}_\infty = W_\infty(\Gamma)$ the lifting of $\Gamma$ in $(\mathbb{C}^*)^n$), this comes from  Theorem 4 of Mikhalkin \cite{M2-04}, which proves the second statement of Theorem \ref{A}. This shows that $\mathring{V}_\infty$ is  also diffeomorphic to $\mathring{V}_t$ for sufficently  large $t\gg 0$ and the first statement of Theorem \ref{A} is proved.

\section{Construction of a natural symplectic structure on $\mathring{V}_{\infty}$}
%{\bfseries
Note that every pair-of-pants inherit a natural symplectic structure coming from  the one of the projective space $\mathbb{CP}^n$. Namely, the projective space $\mathbb{CP}^n$ is obtained from a closed pair-of-pants after collapsing its boundary. Indeed, each component of the boundary of a pair-of pants $\mathscr{P}^n$ is  a $S^1-$fibration over a lower dimensional pair-of-pants $\mathscr{P}^{n-1}$, and the result of collapsing all fibers of these $S^1-$fibrations is precisely the projective space $\mathbb{CP}^n$.

\vspace{0.2cm}

\subsection{Proof of Theorem \ref{B}} ${}$

\vspace{0.2cm}

Let $M_\infty(\Gamma)$  be the variety constructed in  Section 5, which  is a compactification of $\mathring{V}_\infty$ in the toric variety $X_\Delta$ where $\Delta$ is the degree of our original hypersurface $V$. The variety $M_\infty(\Gamma)$
  is obtained by gluing pairs-of-pants along a part of their boundary $\mathscr{B}_j$ that is a product of a holomorphic cylinder (i.e. an  annulus) in $\mathbb C^*$ with a lower dimensional  pair-of-pants $\mathscr{P}^{n-2}$  (i.e. along $[0,1]\times \mathscr{B}_j$).
 Moreover, each $\mathscr{B}_j$  is a circle fibration over $\mathscr{P}^{n-2}$, where the fibers are precisely the fibers of the annulus over the interval $[0,1]$:
$$
\mathscr{B}_j \longrightarrow \mathscr{P}^{n-2} \qquad \textrm{is an $S^1$-fibration}, 
$$
and
$$
\mathcal{A}\times\mathscr{P}^{n-2} = [0,1]\times \mathscr{B}_j \longrightarrow \mathscr{P}^{n-2} \qquad \textrm{is an annulus fibration},
$$
where $\mathcal{A}$ is the annulus $[0,1]\times S^1$.

Let us denote by $\omega_j^{(n-2)}$ the symplectic form on the pair-of-pants $\mathscr{P}^{n-2}$ coming from the projective space
$\mathbb{CP}^{n-2}$ and $ds\wedge dt$ the symplectic form on $S^1\times \mathbb{R}$. Hence, we obtain a symplectic form $\omega_j := ds\wedge dt + \omega_j^{(n-2)}$ on $[0,1]\times \mathscr{B}_j$. It means that we have a symplectic form on parts where the gluing was done.
Recall that $[0,1]\times \mathscr{B}_j$ can be seen as a neighborhood of a boundary component of the pair-of-pants $\mathscr{P}^{n-1}$. On the other part of $\mathscr{P}^{n-1}$ i.e., $\mathscr{P}^{n-1}\setminus \cup_j([0,1]\times\mathscr{B}_j)$, we already have the symplectic form of a pair-of-pants $\omega^{(n)}$ and the pull back of $\omega^{(n)}$ on the factor $\mathscr{P}^{n-2}$ of any boundary component  is precisely $\omega_j^{(n-2)}$.
However, when we glue $[0,1]\times \mathscr{B}_j$ and $[0,1]\times \mathscr{B}_i$ where the first part  is equipped with the form $ds\wedge dt + \omega_j^{n-2}$ then the second should be equipped with the form $-ds\wedge dt + \omega_i^{n-2}$ because the gluing was done with a reversing orientation (recall  that the forms $\omega_j^{(n-2)}$ and $\omega_i^{(n-2)}$ are the same).
On the other hand, the symplectic forms outside of the gluing parts are well defined since each component is symplectically an open pair-of-pants which is a hyperplane in  the complex algebraic torus $(\mathbb{C}^*)^n$. After taking the compactification of such hyperplanes in the projective space $\mathbb{CP}^{n-1}$, the restriction of these forms on the infinite parts (i.e. the $\mathbb{CP}^{n-2}$'s) are precisely the forms $\omega_j^{n-2}$'s. This gives rise to a global symplectic form  $\mathring{\omega}_{nat}$ on  $\mathring{V}_\infty$. This proves that $\mathring{V}_\infty$ has a natural symplectic structure because all the forms that we used are constructed naturally and the first part of Theorem \ref{B} is proved.

\vspace{0.1cm}

%\newpage

Let us denote by $\mathring{\omega}_t = \iota_t^*(\omega)$  the symplectic form on $\mathring{V}_t$ where $\iota_t$ is the inclusion of $\mathring{V}_t$ in the complex algebraic torus $(\mathbb{C}^*)^n$, $\omega$ is the symplectic form on $(\mathbb{C}^*)^n$ defined by (1).  Using Moser's trick, Mikhalkin showed that $M_\infty(\Gamma)$ is symplectomorphic
to  $V_t$ for a sufficiently large $t\gg0$. Let us denote this symplectomorphism by $\phi$. Hence we have the following commutative digram:

\begin{equation}\label{(2)}
\xymatrix{
(\mathring{V}_\infty, \mathring{\omega}_{nat})\ar[rr]^{\psi = \phi_{|\mathring{V}_\infty}}\ar@{^{(}->}[d]_{j}&&
(\mathring{V}_t, \mathring{\omega}_{t})\ar@{^{(}->}[d]^{i}\cr
(M_\infty(\Gamma), \omega_{nat})\ar[rr]^{\phi}&&(V_t, \omega_t).
}% \qquad\qquad\qquad\qquad    (5)  \nonumber
\end{equation}
This means that  for a sufficiently large $t\gg0$, \, $\mathring{V}_\infty$ is also symplectomorphic to  $\mathring{V}_t$, and the second statement Theorem \ref{B} is proved. Recall that we can prove Theorem \ref{B} using a generalization of Moser's trick for non compact manifolds proved by R. E. Greene and K. Shiohama on 1979 in \cite{GS-79}.

\end{document}